\newcommand{\tbar}[1]{\accentset{\rule{.35em}{.6pt}}{#1}}
\newcommand{\ubar}[1]{\underaccent{\rule{.35em}{.6pt}}{#1}}
\newcommand{\Hom}{\mathsf{Hom}}
\newcommand{\Obj}{\mathsf{Obj}}
\newcommand{\sal}{\mathrm{Sal}}
\newcommand{\hsal}{\widehat{\mathrm{Sal}}}
\newcommand{\rk}{\mathrm{rk}}
\title[Simplicial pseudohyperplane arrangements]{Simplicial pseudohyperplane arrangements\\ give weak Garside groups}
\author{Katherine M. Goldman}
\newtheorem{thm}{Theorem}
\newtheorem{prop}[thm]{Proposition}
\newtheorem{lemma}[thm]{Lemma}
\newtheorem{cor}[thm]{Corollary}
\newtheorem*{thm*}{Theorem}
\newtheorem*{prop*}{Proposition}
\theoremstyle{definition}
\newtheorem{defn}[thm]{Definition}
\let\c\mathcal
\begin{document}

\begin{abstract} 
    In this note we connect the language of Bessis's Garisde categories with Salvetti's metrical-hemisphere complexes in order to find new examples of weak Garside groups. As our main example, we show that the fundamental group of the (appropriately defined) complexified complement of a pseudohyperplane arrangement is a weak Garside group. As a consequence of the Folkman-Lawrence topological realization theorem, we also show that the fundamental group of the Salvetti complex of a (``simplicial'') oriented matroid is a weak Garside group. This provides novel examples of weak Garside groups.
\end{abstract}

\maketitle

Weak Garside groups were formalized by Bessis in \cite{bessis2006garside}. A traditional Garside group is a group endowed with a certain combinatorial structure which provides fruitful information about the group. 
The structure on a Garside group generalizes readily to groupoids, giving Bessis's Garside groupoids. A weak Garside group is a group which is equivalent (as a category) to a Garside groupoid\footnote{We clarify that a ``weak Garside group'' is not the same as a ``quasi Garside group''. A quasi Garside group may be thought of as an ``infinite-type'' Garside group.}. In general, weak Garside groups may not be Garside groups, but the weak Garside property has shown to be rather fruitful on its own. Many results for Garside groups can be easily extended to weak Garside groups, and some recent work has been done to study weak Garside groups in their own right.
Some notable properties for a group $G$ which is a finite-type (weak) Garside group include
\begin{enumerate}
    \item The word problem for $G$ is solvable \cite{dehornoy1999gaussian}.
    \item $G$ is biautomatic \cite{dehornoy1999gaussian}; in particular, the $k$-th order homological Dehn function and homotopical Dehn function of $G$ are Euclidean \cite{wenger2005isoperimetric,behrstock2019combinatorial}. 
    \item $G$ has a finite $K(G,1)$ \cite{bessis2006garside}; in particular, $G$ is torsion-free.
    \item $G$ is Helly \cite{huang2021helly}; in particular, this implies
    \begin{itemize}
        \item $G$ admits a geometric action on an injective metric space,
        \item $G$ admits an EZ-boundary and a Tits boundary,
        \item The Farrell-Jones conjecture with finite wreath products holds for $G$, and
        \item The coarse Baum-Connes conjecture holds for $G$.
    \end{itemize}
\end{enumerate}

Primary examples of (weak) Garside groups come from complements of \emph{complex hyperplane arrangements}.
A complex hyperplane arrangement is a finite collection  $\c H$ of linear hyperplanes (i.e., codimension-1 linear subspaces) in $\mathbb{C}^n$. We denote the complement of this collection by \[M(\c H) = \mathbb{C}^n \setminus \bigcup_{H \in \c H} H.\]

There are two beautiful results concerning when $\pi_1(M(\c H))$ is a weak Garside group. 
The first, by Deligne \cite{deligne1972immeubles}, considers the following class: suppose $\c H$ is a finite collection of \emph{real} linear hyperplanes (i.e., codimension-1 linear subspaces of $\mathbb{R}^n$). 
We call a connected component of $\mathbb{R}^n \setminus \bigcup_{H \in \c H} H$ a \emph{chamber}, and we call $\c H$ \emph{simplicial} if each chamber is a simplicial cone (i.e., the $\mathbb{R}^+$-span of a simplex). 
Let $\c H_{\mathbb{C}} = \{\, H_{\mathbb{C}} : H \in \c H\,\}$, where $H_{\mathbb{C}}$ denotes the complexification of the linear space $H$. 
Then if $\c H$ is simplicial, $\pi_1(M(\c H_{\mathbb{C}}))$ is a weak Garside group \cite{deligne1972immeubles}. 
The prototypical example of a Garside group arising in this way is a braid group on $n$-strands \cite{garside69}, or, more generally, the spherical-type Artin groups \cite{brieskorn1972artin} (of which the braid groups are examples). 

The second major example comes from Bessis, who in \cite{bessis2015finite} proved the following. Suppose $\c H$ is the set of reflection hyperplanes for a well-generated complex reflection group. (A well-generated complex reflection group is a finite subgroup of $\mathrm{GL}_n(\mathbb{C})$ generated by exactly $n$ complex reflections; complex reflections are finite-order linear transformations of $\mathbb{C}^n$ fixing a hyperplane.) Then $\pi_1(M(\c H))$ is a weak Garside group \cite{bessis2015finite}.

One may ask if the assumption that the hyperplanes are linear in the previous examples may be dropped. That is, are there similar results for finite collections of submanifolds which are simply complex codimension-1 and intersect at the origin? While it is certainly unfeasable to expect this to be true of any arbitrary submanifolds, if we restrict ourselves to a certain class of submanifolds, called pseudohyperplanes, we show that the answer is yes; i.e., the fundamental group of the complement of a simplicial pseudohyperplane arrangement is a weak Garside group. We give the precise statements now.

These definitions are based on those found in \cite{mandel1982topology}. 
Consider an embedding $\mathbb{S}^{d-1} \hookrightarrow \mathbb{S}^{d}$ with image $S$. 
The subspace $S \subseteq \mathbb{S}^{d}$ is called  a \emph{pseudosphere} if the components of $\mathbb{S}^{d} \setminus S$ are homeomorphic to the unit ball in $\mathbb{R}^d$. 
In this case, the components of $\mathbb{S}^{d} \setminus S$ are called the (open) halfspaces of $S$.
(One can verify that there are always two such components.)
Let $\c A= \{ S_i \}_{i \in I}$ be a finite collection of pseudospheres of $\mathbb{S}^{d}$. 
For $J \subseteq I$, let $S_J = \bigcap_{j \in J} S_j$. 
We call $\c A$ an \emph{arrangement of pseudospheres (in $\mathbb{S}^{d}$)} if for each $J \subseteq I$ and $k \in I$ satisfying $S_J \not\subset S_k$, we have that $S_J \cap S_k$ is a pseudosphere in $S_J$ with halfspaces coming from the intersection of $S_J$ with the halfspaces of $S_k$. (Compare to the intersection of a hyperplane arrangement with the unit sphere.) If $\c A$ is a pseudosphere arrangement, then the cone
\begin{align*}
    c \c A = \{\,c S : S \in \c A\,\}
\end{align*}
is called a \emph{pseudohyperplane arrangement}. (The cone $cS$ on a pseudosphere $S$ is the set $cS = \{\,\alpha x : \alpha \in [0,\infty), x \in S\,\}$.) To a pseudohyperplane arrangement, one can associate a natural ``complexified complement'' (even though these are not linear subspaces) by \[
M(c \c A) = \mathbb{C}^n \setminus \bigcup_{H \in c \c A} (H + iH),\] where $H + iH = \{\,h_1 + ih_2 : h_i \in H\,\}$. One of our main results (part of Theorem \ref{thm:mainconsequence1}) is as follows (comparable to \cite{deligne1972immeubles}). 

\begin{thm*} 
    If $c \c A$ is a centrally symmetric simplicial pseudohyperplane arrangement, then $\pi_1(M(c \c A))$ is a weak Garside group.
\end{thm*}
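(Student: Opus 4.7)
The plan is to imitate Deligne's proof \cite{deligne1972immeubles}, replacing the combinatorics of a real linear simplicial arrangement with that of its pseudohyperplane generalization, which by Folkman-Lawrence corresponds to a simplicial oriented matroid. First I would construct a ``pseudo-Salvetti'' complex $\hsal(c\c A)$ whose cells are indexed by pairs $(C, F)$ with $F$ a face of the arrangement contained in the closed chamber $\overline{C}$, glued by the usual Salvetti rule. The first key claim is that $\hsal(c\c A)$ is homotopy equivalent to $M(c\c A)$. Salvetti's classical theorem relies on linearity, but the homotopy equivalence should follow by replacing metric neighborhoods with the cellular neighborhoods of faces in $\mathbb{R}^n$ and checking, stratum by stratum, that the local model for $M(c\c A)$ agrees with Salvetti's. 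Equivalently, one can invoke the Bj\"orner-Ziegler oriented-matroid Salvetti complex and observe that the Folkman-Lawrence correspondence preserves the relevant stratification.

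Next I would define a Garside groupoid $\c G(c\c A)$ whose objects are the chambers of $c\c A$ and whose morphisms from $C$ to $D$ are equivalence classes of positive galleries $C = C_0, C_1, \dots, C_k = D$ in which consecutive pairs $C_{i-1}, C_i$ are separated by a single pseudohyperplane. The defining relations are the ``rank-$2$ relations'' coming from localization at codimension-$2$ faces: two minimum galleries between opposite chambers in any rank-$2$ sub-arrangement are declared equal. Standard arguments in the style of \cite{bessis2006garside} then identify the vertex group of $\c G(c\c A)$ at any chamber with $\pi_1(\hsal(c\c A)) \cong \pi_1(M(c\c A))$.

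To verify the Garside axioms I would take as simples the minimum positive galleries starting at $C$, ordered by the prefix relation. Central symmetry lets us define, for each chamber $C$, its antipode $-C$ and a distinguished simple $\Delta_C$ from $C$ to $-C$ crossing every pseudohyperplane exactly once. The two axioms that require real work are: (i) $\Delta_C$ conjugates simples to simples in a length-preserving way, which follows because central symmetry permutes the pseudohyperplanes; and (ii) the poset of simples under prefix is a lattice. The latter is the crux: for real simplicial linear arrangements it is Deligne's theorem, and for simplicial oriented matroids it is the classical ``tope poset is a lattice'' statement, which then transfers to pseudohyperplane arrangements via Folkman-Lawrence.

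The main obstacle is exactly this lattice axiom. One must first confirm that ``simplicial'' in our sense (every chamber a cone on a simplex) matches the oriented-matroid notion, and that the Folkman-Lawrence equivalence preserves the prefix order on minimum galleries so that the known lattice property for simplicial oriented matroids can be invoked. A secondary technical obstacle is the identification $\hsal(c\c A) \simeq M(c\c A)$: Salvetti's original equivalence uses the linear geometry of the hyperplanes, so one either reproves it by hand using only the regular CW structure on $\mathbb{R}^n$ induced by $c\c A$ or appeals to a pre-existing oriented matroid version and checks compatibility with the complexified complement.
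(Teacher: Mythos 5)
Your overall architecture is essentially the paper's, transposed into the dual language: the paper works with Salvetti's metrical-hemisphere complex $Q(\c A)$ dual to the arrangement, builds $\sal(Q)$, and takes the category whose morphisms are $\sim$-classes of positive edge paths (your positive galleries), with $\Delta$ coming from central symmetry and the homotopy equivalence $M(c\c A)\simeq\hsal(c\c A)$ imported from Deshpande. But there is a genuine gap at the step you yourself call the crux. The Garside axiom requires that the \emph{entire} poset $(\c C_{x\rightarrow},\preccurlyeq)$ of positive morphisms be a lattice, not merely the poset of simples (minimal galleries) under prefix. The ``tope poset of a simplicial oriented matroid is a lattice'' theorem of Bj\"orner--Edelman--Ziegler gives you only the latter, since the simples based at $C$ biject with the chambers once flatness is known. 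Passing from a lattice of simples to a lattice on all positive morphisms is precisely the content of Deligne's Proposition 1.14/Corollary 1.20, which the paper must re-prove in the pseudo-setting (Proposition \ref{prop:preglb}); alternatively one could package the simples as a Garside germ and invoke germ theory, but that route requires verifying nontrivial associativity/closure conditions on the partial product of galleries, which your sketch does not address. Either way, this step cannot be outsourced to Folkman--Lawrence plus the tope-lattice theorem.

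A second omission: cancellativity is part of the definition of a Garside category and is not automatic for the positive-gallery category of a pseudoarrangement. The paper imports it from Salvetti (Lemma \ref{lem:qcancel}, i.e., his Theorem 31), and it is also an input to the proof of the lattice property (the reverse implication in Proposition \ref{prop:preglb} uses cancellation). You also need flatness --- that any two minimal positive galleries between the same pair of chambers are equivalent under the rank-$2$ relations --- before ``the'' simple from $C$ to $D$ is well defined; this holds for proper arrangements by Salvetti's Theorem 20 but is used silently in your sketch. Your treatment of the remaining ingredients (the homotopy equivalence with the complexified complement via an oriented-matroid Salvetti complex, the identification of the vertex group with $\pi_1$, and the naturality of $\Delta$ from central symmetry) is consistent with the paper's.
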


In particular, this positively answers a question of Deshpande regarding the bi-automaticity of such groups \cite[p.~419]{deshpande2016arrangements}, as well as showing that they possesses the other properties of weak Garside groups given above.

We note that there are many pseudohyperplane arrangements which are not realizable as hyperplane arrangements. A famous example is the non-Pappus arrangement seen in Figure \ref{fig:nonpap}.

\begin{figure}
    \includegraphics[scale=0.15]{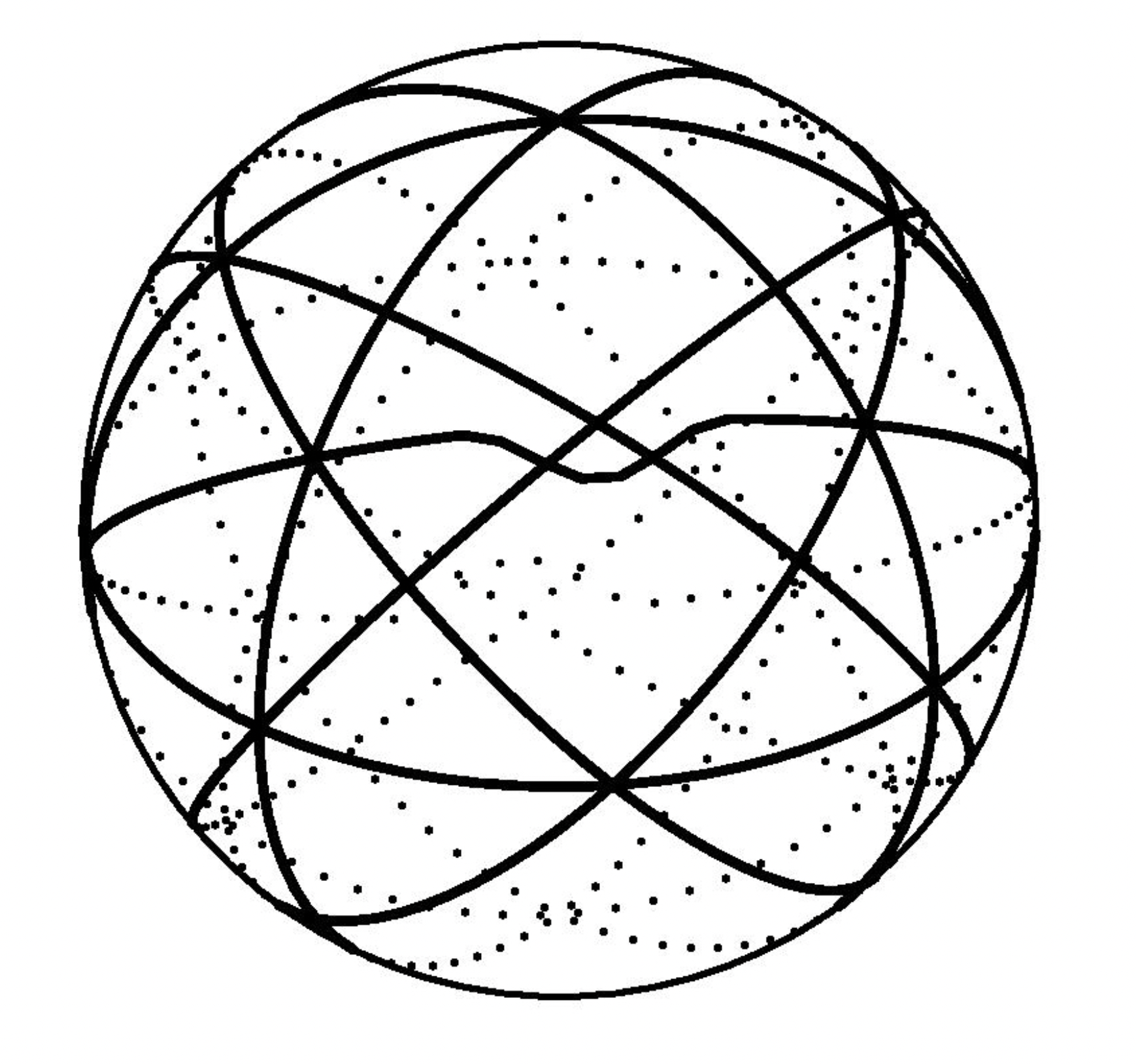}
    \caption{The non-Pappus arrangement, from \cite[Fig.~1]{deshpande2016arrangements}}
    \label{fig:nonpap}
\end{figure}

Other examples come from pseudoline arrangements in $\mathbb{RP}^2$ \cite[Ch.~2]{grunbaum1967convex}; these are finite collections of simple closed curves in $\mathbb{RP}^2$ such that every two curves intersect (transversely) in exactly one point. By lifting to $\mathbb{S}^2$ and taking cones, we have a non-realizable arrangement of pseudoplanes in $\mathbb{R}^3$. There are (at least) seven infinite families of simplicial pseudoline arrangements \cite[Ch.~3]{grunbaum1967convex}.

We also note that as a consequence of \cite{delucchi2017equivariant}, there exist pseudohyperplane arrangement complements whose fundamental groups are not isomorphic to the fundamental group of any hyperplane arrangement complement. Although, this is explicitly demonstrated using the above non-Pappus arrangement, which is not simplicial.
We believe that there exist such simplicial examples, but have not yet been able to find any in the literature.

The famous Folkman-Lawrence topological realization theorem gives a 1-1 correspondence between pseudohyperplane arrangements\footnote{Originally shown for ``pseudohemisphere arrangements'' in \cite{folkman1978oriented}, stated and proven in terms of pseudosphere/pseudohyperplane arrangements in \cite{mandel1982topology}} and oriented matroids (see Proposition \ref{prop:toprep}).
Oriented matroids are common combinatorial objects which have broad uses in mathematics, and in particular capture the combinatorics of pseudohyperplane arrangements. For a given oriented matroid $\c M$, there is an associated simplicial complex, called the \emph{Salvetti complex} $\sal(\c M)$ \cite[\S 5.5]{ziegler1987algebraic}. This complex turns out to be homotopy equivalent to $M(c \c A)$ for $c \c A$ the pseudohyperplane arrangement associated to $\c M$ \cite[Thm.~2.30]{deshpande2016arrangements}.
In Section \ref{sec:matroids}, we utilize this deep connection of pseudohyperplane arrangements with oriented matroids to show the following (Theorem \ref{thm:simpmatgar}).

\begin{thm*}
    If $\c M$ is a ``simplicial'' oriented matroid, then $\pi_1(\sal(\c M))$ is a weak Garside group.
\end{thm*}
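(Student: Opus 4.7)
The plan is to reduce the statement directly to the first main theorem (for centrally symmetric simplicial pseudohyperplane arrangements) via the Folkman--Lawrence topological realization theorem (Proposition \ref{prop:toprep}). Given a simplicial oriented matroid $\c M$, I would first apply Folkman--Lawrence to obtain an arrangement of pseudospheres $\c A \subseteq \mathbb{S}^d$ whose associated oriented matroid is $\c M$, and then form the cone $c\c A \subseteq \mathbb{R}^{d+1}$, which is the associated pseudohyperplane arrangement.

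Two properties of $c\c A$ must then be verified. First, $c\c A$ is centrally symmetric: pseudosphere arrangements produced from oriented matroids via Folkman--Lawrence carry an intrinsic antipodal symmetry (each pseudosphere is preserved by the antipodal map of $\mathbb{S}^d$, which swaps its two halfspaces), and coning preserves this symmetry. Second, $c\c A$ must be simplicial, and this is precisely where the notion of a ``simplicial'' oriented matroid (to be introduced in Section \ref{sec:matroids}) enters: the intended definition should translate the geometric requirement that every chamber of $c\c A$ be a simplicial cone into the combinatorial data of $\c M$, using the standard bijection between chambers of $c\c A$ and maximal covectors (topes) of $\c M$.

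Once both properties are in hand, the first main theorem applies and yields that $\pi_1(M(c\c A))$ is a weak Garside group. Combining this with the homotopy equivalence $\sal(\c M) \simeq M(c\c A)$ of \cite[Thm.~2.30]{deshpande2016arrangements} gives $\pi_1(\sal(\c M)) \cong \pi_1(M(c\c A))$, and the conclusion follows.

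The main obstacle is to formulate the definition of ``simplicial'' for an oriented matroid so that it genuinely captures the geometric simplicial condition on $c\c A$. Since Folkman--Lawrence only produces the realization up to piecewise-linear homeomorphism of $\mathbb{S}^d$, one must verify that simpliciality is a combinatorial invariant of $\c M$ equivalent to simpliciality of every (equivalently, any) such realization. Once the correct combinatorial definition is chosen, this verification reduces to reading the face structure of each chamber off the topes and cocircuits of $\c M$, which is routine; the conceptual content of the theorem lies entirely in making this translation precise.
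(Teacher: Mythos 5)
Your proposal follows essentially the same route as the paper: realize $\c M$ via Folkman--Lawrence as a proper, centrally symmetric (``central'') pseudosphere arrangement, take ``simplicial'' for $\c M$ to mean simpliciality of that realization (well-defined because the realization is unique up to homeomorphism and homeomorphism type is determined by the dual complex), and then invoke the arrangement-level theorem. The only cosmetic difference is that the paper applies Proposition \ref{thm:simppseudogarside} directly to $\pi_1(\sal(\c A(\c M)))$ rather than detouring through $M(c\c A)$; note that the homotopy equivalence of \cite[Thm.~2.30]{deshpande2016arrangements} is with the completed complex $\hsal(c\c A)$ rather than $\sal(c\c A)$, though their fundamental groups agree since the $2$-skeletons coincide.
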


Our proof generalizes a result of Cordovil \cite[Thm.~4.1]{cordovil1994homotopy}, who shows that under the same assumptions on an oriented matroid $\c M$, its Salvetti complex $\sal(\c M)$ is a $K(\pi,1)$. Although the result in said article does not show further group theoretic or geometric properties of $\pi_1(\sal(\c M))$, and in particular does not discuss Garside structures or normal forms as in \cite{deligne1972immeubles}. In particular, the result shown in this paper that $\pi_1(\sal(\c M))$ is a weak Garside group can be thought of as a completion of the generalization of Deligne's work to the oriented matroids.

Our main technical tool, defined in Section \ref{subsec:MHCs}, are the 
metrical-hemisphere complexes introduced by Salvetti in \cite{salvetti1993homotopy}. These complexes act as a basis for generalizing the construction of the ``Salvetti complex'' of a hyperplane arrangement to the pseudosphere/pseudohyperplane arrangements. 
It is shown in \cite{deshpande2016arrangements} that, analagous to the hyperplane case, the Salvetti complex of a pseudohyperplane arrangement is homotopic to its complexified complement. 
We may thus adapt many of the arguments involving the Salvetti complex of a hyperplane arrangement to those of pseudohyperplane arrangements---and in fact, our arguments may be applied to the Salvetti complex of any (suitable) metrical-hemisphere complex. This is the content of Section \ref{sec:simpMH}. See Theorem \ref{thm:fismhgarside} and Corollary \ref{thm:salcpxgarside} for the full statement of our most general results.

The paper is structured as follows. In Section 1, we provide the relevant background information regarding Garside categories and metrical hemisphere complexes. In Section 2, we show that the Salvetti complex of appropriate metrical hemisphere complexes gives rise to a natural categorical Garside structure, and the isotropy group of this Garside structure is isomorphic to the fundamental group of the Salvetti complex. In Section 3, we apply this result to simplicial pseudohyperplane arrangements and (simplicial) oriented matroids.

\section{Background}

\subsection{Garside categories}

The material in this section originally comes from \cite{bessis2006garside}, but  our statements are primarily adapted from \cite{haettel2022lattices}.
We recall some notions from basic category theory first, mainly to establish notation. Throughout this paper, we assume all categories are small.

Let $\c C$ be a (small) category. The set of objects of $\c C$ is denoted $\Obj(\c C)$ and the set of morphisms is denoted $\Hom(\c C)$. We denote the identity functor of $\c C$ by $1_{\c C}$. 
For $x,y \in \Obj(\c C)$, let $\c C_{x\rightarrow}$ denote the set of morphisms starting at $x$, $\c C_{\rightarrow y}$ the set of morphisms ending at $y$, and $\c C_{x\rightarrow y}$ the set of morphisms from $x$ to $y$.
If $f \in \c C_{x \rightarrow y}$, we sometimes write $x \xrightarrow{f} y$. If $x  \xrightarrow{f} y  \xrightarrow{g} z$ then the composition of $f$ and $g$ is $x \xrightarrow{fg} z$ (so we adopt the ``path'' style of the order of composition rather than the ``function'' style).

We recall that a natural transformation $\tau$ between functors $F, G: \c C \to \c D$, which we denote by $\tau : F \to G$, is a mapping from $\Obj(\c C)$ to $\Hom(\c D)$ which for $x,y \in \Obj(\c C)$ and $x \xrightarrow{f} y$ satisfies 
\begin{enumerate}
    \item $F(x) \xrightarrow{\tau(x)} G(x)$, and
    \item $ F(f)\tau(y) = \tau(x) G(f)$.
\end{enumerate}

For $f,g \in \Hom(\c C)$, we write $f \preccurlyeq g$ if there is an $h \in \Hom(\c C)$ such that $g = fh$.
We write $g \succcurlyeq f$ if there is an $h \in \Hom(\c C)$ such that $g = hf$. A nontrivial $f \in \Hom(\c C)$ which cannot be factored into two nontrivial factors is called an atom.

The category $\c C$ is cancellative if for $a,b,f,g \in \Hom(\c C)$, $afb = agb$ implies $f = g$. It is homogeneous if there is a function $\ell : \Hom(\c C) \to \mathbb{Z}_{\geq 0}$ such that $\ell(fg) = \ell(f) + \ell(g)$, and $\ell(f) = 0$ if and only if $f$ is trivial. 
Note that if $\c C$ is homogeneous, then $(\c C_{x \rightarrow}, \preccurlyeq)$ and $(\c C_{\rightarrow y}, \succcurlyeq)$ are posets.

\begin{defn}
A \emph{Garside category} is a homogeneous cancellative category $\c C$ equipped with an automorphism $\phi : \c C \to \c C$ and a natural transformation $\Delta : 1_{\c C} \to \phi$ satisfying the following properties.
\begin{enumerate}
     \item For all $x,y \in \Obj(\c C)$, the posets $(\c C_{x \rightarrow}, \preccurlyeq)$ and $(\c C_{\rightarrow y}, \succcurlyeq)$ are lattices.
     \item All atoms in $\Hom(\c C)$ are \emph{simple} ($x \xrightarrow{f} y$ is simple if there exists an $y \xrightarrow{f^*} \phi(x)$ so that $ff^* = \Delta(x)$).
 \end{enumerate} 
 Sometimes for $x \in \Obj(\c C)$, we write $\Delta_x = \Delta(x)$ and $\Delta^x = \Delta(\phi^{-1}(x))$. If $x$ is clear from context, we may write $\Delta = \Delta_x$. If the set of simple morphisms is finite, we call $\c C$ \emph{finite type}. 

Associated to a Garside category $\c C$ is a groupoid $\c G$ into which $\c C$ embeds, obtained by adding formal inverses to all morphisms in $\c C$. We call such a $\c G$ a \emph{Garside groupoid}. For $x \in \Obj(\c G)$, we sometimes write $\c G_x$ for $\c G_{x \rightarrow x}$, and call $\c G_x$ the \emph{isotropy group at $x$}.
A \emph{weak Garside group} is a group which is isomorphic to the isotropy group of some object of a Garside groupoid. A \emph{Garside group} is a Garside groupoid with one object. A \emph{Garside monoid} is a Garside category with one object. Each of these objects will be called \emph{finite type} if the corresponding Garside category is finite type. 
\end{defn}

\subsection{Metrical Hemisphere Complexes}
\label{subsec:MHCs}

The following comes from \cite{salvetti1993homotopy}.

Let $Q$ be a connected cell complex with poset of (closed) cells $\c Q$. 
For a cell $e \in \c Q$, let $\c Q(e)$ denote the set of cells of $\c Q$ contained in $e$ and let $V(e)$ denote the vertices of $e$. The set of $i$-cells of $Q$ is denoted $\c Q^{(i)}$ and the $i$-skeleton of $Q$ is denoted $Q^{(i)}$. 
For vertices $v,w$ of $Q$, we let $d(v,w)$ denote the distance between $v$ and $w$ in $Q^{(1)}$ with each edge given length 1 (so $d(v,w)$ is the combinatorial length of the shortest edge path from $v$ to $w$).
 
\begin{defn} \label{def:qmh}
    A regular cell complex $Q$ is quasi-metrical-hemisphere (QMH) if there exist maps $\ubar w, \tbar w : \c Q^{(0)} \times \c Q \to \c Q^{(0)}$ such that 
    \begin{enumerate}
        \item[(i)] For each cell $e$, the functions $\ubar w(\cdot , e)$ and $\tbar w(\cdot, e)$ are maps from $\c Q^{(0)}$ to $V(e)$,
        \item[(ii)] The vertices $\ubar w(v , e)$ and $\tbar w(v, e)$ are the unique vertices of $e$ satisfying
        \begin{align*}
            d(v, \ubar w(v , e)) = \min_{w \in V(e)} d(v,w) \quad \text{ and } \quad d(v, \tbar w(v , e)) = \max_{w \in V(e)} d(v,w),
        \end{align*}
        and
        \item[(iii)] For all vertices $v$ and cells $e$ of $Q$, and all cells $e' \in \c Q(e)$, 
        \begin{align*}
            \ubar w(v, e') &= \ubar w(\ubar w(v,e), e') =\tbar w(\tbar w(v,e), e'), \quad \text{ and} \tag{A} \\
            \tbar w(v,e') &= \ubar w(\tbar w(v,e),e') = \tbar w(\ubar w(v,e),e'). \tag{B}
        \end{align*}
    \end{enumerate}
\end{defn}

\begin{prop}
    If $Q$ is a regular cell complex satisfying (i) and (ii) above, then (iii) (i.e., (A) and (B) together) is equivalent to
    \begin{align*}
        d(v, \tbar w(v,e)) &= d(v,w) + d(w, \tbar w(v,e)) 
    \end{align*}
    for all $v \in Q^{(0)}$, $e \in \c Q$, and $w \in V(e)$. Moreover, if $Q$ is QMH, then
    \begin{align*}
        d(v,w) &= d(v, \ubar w(v,e)) + d(\ubar w(v,e), w) 
    \end{align*}
    for all $v \in Q^{(0)}$, $e \in \c Q$, and $w \in V(e)$.
\end{prop}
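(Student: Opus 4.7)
Write $T = \tbar w(v,e)$ and $U = \ubar w(v,e)$ throughout, and let $(*)$ denote the identity $d(v,T) = d(v,w) + d(w,T)$ for $w \in V(e)$, and $(**)$ denote the identity $d(v,w) = d(v,U) + d(U,w)$ for $w \in V(e)$. The plan is to prove the equivalence $\text{(iii)} \iff (*)$ and then obtain $(**)$ as a consequence.

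For $\text{(iii)} \Rightarrow (*)$, the key observation is that only condition (B) applied to \emph{edges} is needed. For any $1$-dimensional subcell $[w,w'] \subseteq e$, condition (ii) forces $d(v,w) \ne d(v,w')$, and since $w,w'$ are adjacent in $Q^{(1)}$ the triangle inequality gives $|d(v,w) - d(v,w')| \leq 1$, so they differ by exactly $1$; the same reasoning applies to $d(\cdot, T)$. Now (B) applied to $[w,w']$ says that the endpoint at maximum distance from $v$ coincides with the endpoint at minimum distance from $T$, which forces $d(v,\cdot)$ and $d(\cdot, T)$ to change by opposite signs across every edge of $e$. Hence $w \mapsto d(v,w) + d(w,T)$ is constant on each edge, and thus constant on the $1$-skeleton of $e$ (which is connected for $\dim e \geq 1$; the case $\dim e = 0$ is trivial). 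Evaluating this constant at $w=T$ identifies it with $d(v,T)$, yielding $(*)$.

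For $(*) \Rightarrow \text{(iii)}$, I would first derive $(**)$ from $(*)$ alone. From $(*)$ applied at $(v,e)$ we have $d(T,w) = d(v,T) - d(v,w)$ for $w \in V(e)$, so the unique maximizer of $d(T, \cdot)$ on $V(e)$ is $U$; by (ii) this identifies $\tbar w(T,e) = U$. Applying $(*)$ at $(T,e)$ then gives $d(T,U) = d(T,w) + d(w,U)$; substituting the formula for $d(T,w)$ and $d(T,U) = d(v,T) - d(v,U)$ (from $(*)$ at $w = U$) yields $(**)$ after rearrangement. With both $(*)$ and $(**)$ in hand, each of the four identities in (A) and (B) for an arbitrary $e' \in \c Q(e)$ follows by a one-line calculation: for example, (B2) holds because $d(v,w_0) = d(v,T) - d(w_0,T)$ for $w_0 \in V(e') \subseteq V(e)$, so maximizing the left side over $V(e')$ is equivalent to minimizing $d(w_0,T)$; the other three identities are analogous, using $(**)$ rather than $(*)$ for the two identities involving $U$.

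The ``moreover'' clause is then immediate: QMH entails (iii), hence $(*)$, hence $(**)$ by the first step of the converse. The only non-routine step I anticipate is the ``$d(v,\cdot) + d(\cdot,T)$ is constant'' argument in the forward direction. A first instinct is to induct on $\dim e$ and try to rule out extraneous local maxima of $d(v,\cdot)$ inside $V(e)$, but this quickly entangles one in global DAG-theoretic considerations on the $1$-skeleton of each cell; the local edge argument bypasses the difficulty entirely by directly exploiting the sign-flipping of $d(v,\cdot)$ versus $d(\cdot,T)$ guaranteed by (B) on edges.
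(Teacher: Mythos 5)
Your argument is correct; note that the paper itself supplies no proof of this proposition (it is imported from Salvetti's paper, cited at the top of the subsection, without argument), so there is nothing in the text to compare against, and your proof stands or falls on its own merits. It stands. The forward direction in fact proves something slightly sharper than what is asked: condition (B) applied only to the $1$-dimensional subcells of $e$ already forces $d(v,\cdot)+d(\cdot,\tbar w(v,e))$ to be constant on $V(e)$, via the sign-flip of $d(v,\cdot)$ against $d(\cdot,\tbar w(v,e))$ across each edge together with connectivity of the $1$-skeleton of the closed cell $e$ (a standard property of regular CW complexes, but worth one sentence of justification in a written version); evaluating at $w=\tbar w(v,e)$ gives $(*)$, and combined with your converse this shows (iii) is equivalent to its restriction to edges. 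The converse is also sound: $(*)$ at $(v,e)$ writes $d(\tbar w(v,e),\cdot)$ as a constant minus $d(v,\cdot)$ on $V(e)$, which identifies $\tbar w(\tbar w(v,e),e)=\ubar w(v,e)$ and, after a second application of $(*)$ at $(\tbar w(v,e),e)$, yields $(**)$; each of the four identities in (A) and (B) then reduces to the observation that two functions on $V(e')$ differing by a constant, or by a constant minus the identity, have the same unique extremizers, uniqueness being supplied by hypothesis (ii) applied to the relevant base vertex and cell. The only blemish is a labeling slip: the computation you display (maximizing $d(v,\cdot)$ over $V(e')$ equals minimizing $d(\cdot,\tbar w(v,e))$) proves $\tbar w(v,e')=\ubar w(\tbar w(v,e),e')$, i.e.\ the first equality of (B), whereas the second equality $\tbar w(v,e')=\tbar w(\ubar w(v,e),e')$ is one of the two that require $(**)$; this does not affect correctness, since all four identities are covered by your scheme.
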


\begin{defn} \label{def:lmh}
    A regular cell complex $Q$ is local-metrical-hemisphere (LMH) if every cell $e \in \c Q$ (viewed as a subspace of $Q$) is QMH, with respective functions denoted $\tbar w_e$ and $\ubar w_e$, such that the following compatibility condition holds: If $f \in \c Q(e) \cap \c Q(e')$ and $v \in V(e) \cap V(e')$, then
    \begin{align*}
        \ubar w_{e}(v,f) &= \ubar w_{e'}(v,f) \quad \text{ and } \quad 
        \tbar w_{e}(v,f) = \tbar w_{e'}(v,f) 
    \end{align*}
\end{defn}

There are QMH complexes which are not LMH complexes. Take for example the complex in Figure \ref{fig:QMHnotLMH} consisting of an octagonal $2$-cell $e$ glued along a square $2$-cell $e'$ as indicated. One may readily verify that this complex is QMH. However, $\ubar w_e(v,f) = v_1 \not= v_2 = \ubar w_{e'}(v,f)$, so the complex is not LMH.

\begin{figure}[t!]
\begin{minipage}{.5\textwidth}
\centering
\includegraphics{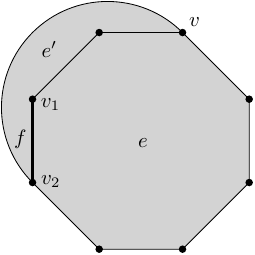}
\caption{A complex which is QMH but not LMH \cite[Fig.~2a]{salvetti1993homotopy}}
\label{fig:QMHnotLMH}
\end{minipage}%
\begin{minipage}{.5\textwidth}
\centering
\includegraphics{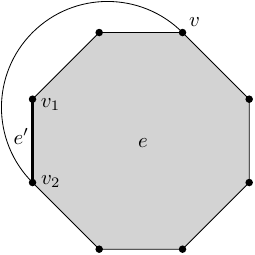}
\caption{A complex which is QMH and LMH but not MH \cite[Fig.~2b]{salvetti1993homotopy}}
\label{fig:QMHandLMHnotMH}
\end{minipage}
\end{figure}

\begin{defn}
    A regular cell complex $Q$ is metrical-hemisphere (MH) if it is both QMH and LMH, and for all $e \in \c Q$, $e' \in \c Q(e)$, and $v \in V(e)$, we have
    \begin{align*}
        \ubar w(v, e') &= \ubar w_e(v, e'), \quad \text{ and } \quad \tbar w(v,e') = \tbar w_e(v, e'). 
    \end{align*}
\end{defn}

Important examples of MH complexes come from hyperplane arrangements. This is explained in more detail in Section \ref{sec:matroids}.
There are complexes which are both QMH and LMH but not MH. For example, see Figure \ref{fig:QMHandLMHnotMH}, consisting of an octagonal 2-cell $e$ and a 1-cell glued as indicated. One may easily check that this complex is QMH and LMH. However, $\ubar w_e(v,e') = v_1 \not= v_2 = \ubar w(v,e')$, so it is not MH.

To a QMH or LMH complex $Q$ there is a regular cell complex $X = \sal(Q)$, which we call the \emph{Salvetti complex} of $Q$ built in the following way.

Set $X^{(0)} = Q^{(0)}$. For each edge $e$ of $Q$ with vertices $v_1,v_2$, attach two 1-cells $\langle e; v_1 \rangle$, $\langle e; v_2 \rangle$ to $v_1,v_2$ in $X$ (so that $\partial \langle e; v_i \rangle = \{v_1,v_2\}$). We choose an orientation on $\langle e, v_i \rangle$ so that $v_i$ is the source of this edge. This fully describes the 1-skeleton of $X$ as an oriented graph. 

The $i$-skeleton is constructed by induction. Suppose we have already constructed the $j$-skeleton of $X$ for $1 \leq j \leq i-1$. A cell of $X^{(j)}$ will be denoted by $\langle e; v \rangle$ for $e \in \c Q^{(j)}$ and $v \in \c Q(e)$. Then for each $i$-cell $e$ of $Q$ and $v \in V(e)$, let $\phi(v,e) : \c Q(e) \setminus \{e\} \to X^{(i-1)}$ be the map taking a cell $e'$ of $\partial e$ to $\langle e', \ubar w(v, e') \rangle$.
By the definition of a QMH complex (specifically, (A) and (B)), we have that
\begin{align*}
    \partial  e \cong \bigcup_{e' \in \c Q(e) \setminus \{e\}} \langle e'; \ubar w(v,e') \rangle.
\end{align*}
So for $i$-cells $e$ of $Q$ and vertices $v$ of $e$, we may attach an $i$-cell, denoted $\langle e; v \rangle$, via $\phi(e,v)$. 

Some properties of $\sal(Q)$ are summarized in the following

\begin{prop} \label{prop:summaryofX}
    Let $Q$ be QMH (respectively LMH, MH). Then $\sal(Q)$ is QMH (LMH, MH, resp.). There is a natural map $\psi$ from cells of $\sal(Q)$ to cells of $Q$ given by $\psi(\langle e;v \rangle) = e$. We also have
    \begin{align*}
        \partial \langle e;v \rangle = \bigcup_{e' \in \c Q(e) \setminus \{e\}} \langle e'; \ubar w(v,e') \rangle.
    \end{align*}
    The restriction of $\psi$ to $\sal(Q)^{(0)}$ is an isomorphism onto $Q^{(0)}$. In general, for an $i$-cell $e$ of $Q$, the preimage $\psi^{-1}(e)$ consists of $|V(e)|$ $i$-cells of $\sal(Q)$, i.e., there is one distinct $i$-cell for every vertex of $e$.
\end{prop}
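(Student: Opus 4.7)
The plan is to derive each of the five assertions from the inductive cell-attaching recipe, treating the inheritance of the QMH (resp.\ LMH, MH) structure as the substantive content. Statements (2)--(5) are essentially bookkeeping: the label $\langle e; v\rangle$ records which cell $e$ of $Q$ the new cell sits over, so $\psi$ is well-defined; the boundary formula is verbatim the attaching map from the inductive step; each $i$-cell $e$ of $Q$ contributes one cell $\langle e; v\rangle$ per $v\in V(e)$; and $\psi$ is a bijection on $0$-skeleta because $\sal(Q)^{(0)}:=Q^{(0)}$.

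Before proving the metrical-hemisphere inheritance, I would establish two observations. First, by induction on $\dim(e)$, the vertex set of the cell $\langle e; v\rangle$ in $\sal(Q)$ equals $V(e)$: for a $0$-face $u$ of $e$ the formula $\ubar w(v,u)=u$ forces $\langle u;\ubar w(v,u)\rangle$ to be the vertex $u$ itself, while vertices of higher-dimensional proper faces of $e$ already lie in $V(e)$ by the inductive hypothesis. Second, the combinatorial distance in $\sal(Q)^{(1)}$ coincides with the combinatorial distance in $Q^{(1)}$, because each edge of $Q$ between $v_1$ and $v_2$ produces edges $\langle e;v_1\rangle,\langle e;v_2\rangle$ of $\sal(Q)$ with the same endpoints, so edge paths lift and project without change in length.

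With these in hand, I set $\ubar w_{\sal(Q)}(u,\langle e;v\rangle):=\ubar w(u,e)$ and similarly for $\tbar w_{\sal(Q)}$. Axioms (i) and (ii) of Definition \ref{def:qmh} transfer immediately from the corresponding axioms on $Q$, and axiom (iii) reduces to conditions (A) and (B) for $Q$ once one uses that the face $\langle e';\ubar w(v,e')\rangle$ of $\langle e;v\rangle$ projects under $\psi$ to $e'$. For the LMH claim, the subcomplex $\langle e;v\rangle\subseteq \sal(Q)$ is, by the boundary formula, isomorphic as a regular cell complex to $e\subseteq Q$ via $\psi$; this isomorphism respects $1$-skeleton distances, so QMH-ness of $e$ pushes forward to QMH-ness of $\langle e;v\rangle$, and the compatibility condition for two cells of $\sal(Q)$ sharing a common face translates to the LMH compatibility on $Q$. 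The MH case follows by the same dictionary: the agreement $\ubar w_{\sal(Q)}=\ubar w_{\sal(Q),\langle e;v\rangle}$ on proper faces is exactly the MH condition on $Q$ for $e$ and its faces.

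The step I expect to be most delicate is verifying that the distance \emph{inside} the subcomplex $\langle e;v\rangle$ of $\sal(Q)$ matches the distance inside $e$ viewed in $Q$, rather than merely matching the global distance in $\sal(Q)^{(1)}$. One needs a careful lifting argument showing that shortest paths stay inside $\langle e;v\rangle$ precisely when their $\psi$-images stay inside $e$, so that the witnesses $\ubar w_{\sal(Q),\langle e;v\rangle}(u,f)$ are both unique and well-defined. Once this is settled, each QMH, LMH, and MH axiom on $\sal(Q)$ is inherited by direct translation through $\psi$ on cells and through the identity on vertices.
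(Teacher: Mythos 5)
The paper gives no proof of this proposition: it is stated as a summary of properties established in \cite{salvetti1993homotopy}, following directly from the inductive construction of $\sal(Q)$. Your reconstruction is correct and is essentially the argument one would extract from that construction: the bookkeeping claims are immediate, distances in $\sal(Q)^{(1)}$ agree with those in $Q^{(1)}$ because the $1$-skeleton of $\sal(Q)$ is that of $Q$ with each edge doubled, and transporting $\ubar w$, $\tbar w$ through $\psi$ verifies each axiom. The step you flag as delicate resolves at once: by the boundary formula and condition (A), the closed cell $\langle e;v\rangle$ consists exactly of the cells $\langle e'';\ubar w(v,e'')\rangle$ for $e''\in\c Q(e)$, so $\psi$ restricts to a cell-complex isomorphism $\langle e;v\rangle\to e$ carrying $1$-skeleton to $1$-skeleton, whence intrinsic distances match and the local witnesses are well-defined.
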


\subsection{A categorical structure for MH complexes}

In this section, let $Q$ be a QMH, LMH, or MH complex with maps $\tbar w$ and $\ubar w$ (or $\tbar w_e$ and $\ubar w_e$) as in Definition \ref{def:qmh} (resp.~Definition \ref{def:lmh}), and let $X = \sal(Q)$ be the Salvetti complex for $Q$. In this section we define a category $\c C = \c C(Q)$ based on $Q$ (or more specifically, on $X$). 

\begin{defn}
    A \emph{positive (resp.~negative) path} in $X$ is an edge path $\gamma = (e_1,\dots,e_k)$ 
    which is a positive (resp.~negative) path in the 1-skeleton of $X$ when $X^{(1)}$ is viewed as an oriented graph with orientation described above. 
    A positive path $\gamma$ has an associated negative path $\gamma^{-1}$ formed by traversing $\gamma$ in the reverse direction (so $\gamma^{-1} = (e_k^{-1}, \dots, e_1^{-1})$ where $e^{-1}$ denotes the edge $e$ with its opposite orientation).
    The length of $\gamma$ is defined to be $\ell(\gamma) = k$. A \emph{minimal positive path} in $X$ is a positive path $\gamma$ between $v_1$ and $v_2$ so that
    $\ell(\gamma) = d(v_1,v_2)$.
\end{defn}

Note that there is at least one minimal positive path between any two vertices.

The following definition will be of use later.

\begin{defn}
    Let $\gamma$ be a positive path in $X$, say $\gamma = (\langle v_1,e_1 \rangle, \dots, \langle v_k, e_k \rangle)$, where each $e_i$ is an edge of $Q$ from $v_i$ to $v_{i+1}$, then the path $\gamma^\circ$ \emph{opposite to} $\gamma$ is given by $\gamma^\circ(\langle v_{k+1}, e_k \rangle, \langle v_k, e_{k-1}\rangle, \dots, \langle v_2,e_1 \rangle)$. 
\end{defn}

\begin{defn}
    Let $\gamma_1, \gamma_2$ be positive paths in $X$ with the same initial and terminal vertices. We say $\gamma_1$ and $\gamma_2$ are \emph{elementarily equivalent} if we can write 
    \begin{align*}
        \gamma_1 &= \alpha \hat\gamma_1 \beta, \text{ and} \\
        \gamma_2 &= \alpha \hat\gamma_2 \beta,
    \end{align*}
    where
    \begin{enumerate}
        \item[(i)] $\alpha$ and $\beta$ are positive paths, and
        \item[(ii)] $\hat\gamma_1$ and $\hat \gamma_2$ are minimal positive paths from a vertex $v$ of $X$ to the vertex $\tbar w(v,e)$, where $e$ is a cell of $Q$ containing $v$.
    \end{enumerate}
    That is, we can derive $\gamma_2$ from $\gamma_1$ by replacing a minimal positive subpath (from $v$ to $\tbar w(v,e)$) with another minimal positive path. We say $\gamma_1$ and $\gamma_2$ are \emph{equivalent}, and write $\gamma_1 \sim \gamma_2$, if there is a finite sequence of elementary equivalences taking $\gamma_1$ to $\gamma_2$. The $\sim$ equivalence class of a positive path $\gamma$ is denoted $[\gamma]$.
\end{defn}

We note that elementary equivalence clearly does not change lengths, so if $\gamma_1 \sim \gamma_2$, we know that $\ell(\gamma_1) = \ell(\gamma_2)$. In other words, the length of an equivalence class of paths is well defined, which by abuse of notation we still call $\ell$. 

We can now define a category for an MH complex $Q$.

\begin{defn}
    Let $Q$ be a QMH, LMH, or MH complex. We define a category $\c C(Q)$ as follows: the objects of $\c C(Q)$ are the vertices of $Q$, and the set of morphisms from $x$ to $y$ is the set of equivalence classes of positive edge paths from $x$ to $y$. Composition is given by concatenation of paths.
\end{defn}

An immediate consequence of the definition is the following
\begin{prop} \label{prop:posetsiso}
    For any $x \in \Obj(\c C(Q))$, the map $\gamma \mapsto \gamma^\circ$ is an (order reversing) isomorphism of $(\c C_{x \rightarrow}, \preccurlyeq)$ with $(\c C_{\rightarrow x}, \succcurlyeq)$.
\end{prop}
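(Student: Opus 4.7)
The plan is to verify three things in sequence: first that $\gamma \mapsto \gamma^\circ$ makes sense as a map on positive paths, then that it descends to $\sim$-equivalence classes, and finally that the resulting set-bijection intertwines $\preccurlyeq$ and $\succcurlyeq$ in the prescribed way.

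For the first point, I would record that if $\gamma = (\langle v_1,e_1\rangle,\dots,\langle v_k,e_k\rangle)$ is a positive path from $v_1$ to $v_{k+1}$, then by the construction of $\sal(Q)$ recorded in Proposition \ref{prop:summaryofX} each edge $\langle v_{i+1}, e_i \rangle$ appearing in $\gamma^\circ$ is a genuine oriented $1$-cell of $X$ with source $v_{i+1}$, so $\gamma^\circ$ is positive and runs from $v_{k+1}$ to $v_1$; in particular $\gamma \in \c C_{x\rightarrow}$ implies $\gamma^\circ \in \c C_{\rightarrow x}$. A direct inspection then gives $(\alpha\beta)^\circ = \beta^\circ \alpha^\circ$ for concatenable positive paths $\alpha,\beta$, and $(\gamma^\circ)^\circ = \gamma$.

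The main step is descent to $\sim$-classes, which I reduce to a single elementary equivalence. Write $\gamma_j = \alpha \hat\gamma_j \beta$ for $j=1,2$, where each $\hat\gamma_j$ is a minimal positive path from some vertex $v$ to $\tbar w(v,e)$ and $e$ is a cell of $Q$ containing $v$. Then $\gamma_j^\circ = \beta^\circ \hat\gamma_j^\circ \alpha^\circ$, and the crux is to show that $\hat\gamma_j^\circ$ is again a minimal positive path of this special form. Setting $v' := \tbar w(v,e)$, axiom (A) of Definition \ref{def:qmh} applied with $e' = e$ gives $\tbar w(v',e) = \tbar w(\tbar w(v,e),e) = \ubar w(v,e) = v$ (the final equality using $v \in V(e)$). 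Thus $\hat\gamma_j^\circ$ is a positive path from $v'$ to $\tbar w(v',e)$ of length $d(v', v)$, which is minimal, and $e$ contains $v'$ by axiom (i). This exhibits the needed elementary equivalence between $\gamma_1^\circ$ and $\gamma_2^\circ$.

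With well-definedness established, the induced map on $\sim$-classes is an involution, hence a bijection between $\c C_{x\rightarrow}$ and $\c C_{\rightarrow x}$. The order claim is then formal: if $\gamma_1 \preccurlyeq \gamma_2$, so $\gamma_2 = \gamma_1 h$ for some morphism $h$, then $\gamma_2^\circ = h^\circ \gamma_1^\circ$, whence $\gamma_2^\circ \succcurlyeq \gamma_1^\circ$, and the converse follows by applying the involution. I expect the only real obstacle to be the middle step, which boils down to recognising that $v \mapsto \tbar w(v,e)$ is an involution on $V(e)$; once this is extracted from the QMH axiom (A), the remainder is bookkeeping.
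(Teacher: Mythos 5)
Your argument is correct; the paper states this proposition without proof (as an immediate consequence of the definitions), and your verification supplies exactly the details one would want. The one genuinely non-formal point --- that $\tbar w(\tbar w(v,e),e) = \ubar w(v,e) = v$ for $v \in V(e)$ by axiom (A) of Definition \ref{def:qmh}, so that each $\hat\gamma_j^\circ$ is again a minimal positive path from $v' = \tbar w(v,e)$ to $\tbar w(v',e)$ and the elementary equivalences are preserved --- is handled correctly, and the remaining steps (compatibility with concatenation, the involution property, and the order reversal) are routine as you say.
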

We will study further properties of $\c C(Q)$ in the following section. 

\section{Simplicial MH complexes} \label{sec:simpMH}

We now turn our attention to the main objects of study, which generalize simplicial hyperplane arrangements.

\begin{defn}
    Let $Q$ be an MH complex. We call $Q$ \emph{simplicial} if 
    $Q$ is dual to a simplicial triangulation of a manifold.
\end{defn}

Our following terminology departs from \cite{salvetti1993homotopy} (his ``MH$^*$-complex'' is our ``flat involutive MH complex'').

\begin{defn}
    Let $Q$ be a simply connected MH (or LMH) complex. We call $Q$ \emph{flat} if every pair $\gamma_1$, $\gamma_2$ of minimal positive paths in $\sal(Q)$ with the same initial and terminal vertices are equivalent under $\sim$. If $Q$ is flat and $x,y$ are vertices of $Q$, then the (unique) equivalence class of positive minimal paths from $x$ to $y$ will be denoted $u(x,y)$.

    We say that $Q$ is \emph{involutive} if there is an automorphism $\phi : Q^{(1)} \to Q^{(1)}$ of the 1-skeleton of $Q$ of order 2 (i.e., $\phi^2 = \mathrm{id}$) such that
    \begin{enumerate}
        \item $\phi(v)$ is the unique vertex such that $d(v,\phi(v)) = \max_{w \in Q^{(0)}} d(v,w)$, and
        \item For all vertices $v,w$ of $Q$, $d(v, \phi(v)) = d(v,w) + d(w,\phi(v))$.
    \end{enumerate}
    For any vertex $x$, we let $\Delta(x)$ denote a (choice of) minimal positive path from $x$ to $\phi(x)$.
\end{defn}

If $Q$ is involutive, the involution $\phi$ induces a map on the set of positive paths. By Proposition 24 of \cite{salvetti1993homotopy}, $\phi$ preserves $\sim$, and thus induces an automorphism of $\c C(Q)$, which by abuse of notation we still call $\phi$. Again by abuse of notation, the morphism corresponding to $\Delta(x)$ in $\c C$ is denoted $\Delta(x)$. 
We note that if $Q$ is also flat, then \emph{every} minimal positive path from $x$ to $\phi(x)$ in $Q$ is equivalent to $\Delta(x)$, and thus the morphism $\Delta(x)$ ($= u(x, \phi(x))$) is determined independent from our choice of path between $x$ and $\phi(x)$ in $Q$.

We now have the terminology to state our first main theorem.

\begin{thm} \label{thm:fismhgarside}
    Suppose $Q$ is a flat, involutive, simplicial MH complex (or FISMH). Then $\c C(Q)$ is a Garside category under $\Delta$ (i.e., the map $x \mapsto \Delta(x)$) and $\phi$ as defined above.
\end{thm}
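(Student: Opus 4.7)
The plan is to verify the defining axioms of a Garside category for $\c C(Q)$ in sequence. Homogeneity is immediate: elementary equivalence preserves length, so $\ell$ descends to an additive function on $\Hom(\c C(Q))$ vanishing only on identities. Cancellativity will follow from a local analysis of elementary moves. Each move $\alpha \hat\gamma_1 \beta \to \alpha \hat\gamma_2 \beta$ is supported inside a single cell of $Q$ (by the definition of $\sim$), and the LMH compatibility of the $\tbar w$ maps across overlapping cells lets one rearrange any sequence of moves so that portions occurring within $\alpha$ (or $\beta$) are separated out; right cancellativity then follows from left by Proposition \ref{prop:posetsiso}.

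For the natural transformation $\Delta : 1_{\c C} \to \phi$, I will verify $f \cdot \Delta(y) \sim \Delta(x) \cdot \phi(f)$ for every morphism $f : x \to y$ by induction on $\ell(f)$. For the base case $\ell(f) = 1$, property (2) of the involution gives $d(x, \phi(x)) = 1 + d(y, \phi(x))$, so $f \cdot u(y, \phi(x))$ is a minimal positive path from $x$ to $\phi(x)$, hence equivalent to $\Delta(x) = u(x, \phi(x))$ by flatness. Likewise $u(y, \phi(x)) \cdot \phi(f)$ has length $d(y, \phi(x)) + 1 = d(y, \phi(y))$ (as $\phi$ is a distance-preserving automorphism), so it is a minimal positive path $y \to \phi(y)$, equivalent to $\Delta(y)$. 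Putting these together,
\begin{equation*}
\Delta(x) \cdot \phi(f) \;\sim\; f \cdot u(y, \phi(x)) \cdot \phi(f) \;\sim\; f \cdot \Delta(y).
\end{equation*}
The inductive step is a straightforward concatenation of two applications of the base-case square.

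Simplicity of atoms is now short. An atom $f : x \to y$ is a single positively oriented edge, so by property (2), $d(x, \phi(x)) = 1 + d(y, \phi(x))$, and any minimal positive path $g : y \to \phi(x)$ makes $f \cdot g$ a minimal positive path $x \to \phi(x)$; flatness then yields $f \cdot g \sim \Delta(x)$, showing $f$ is simple.

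The hard part is the lattice axiom. By Proposition \ref{prop:posetsiso} it suffices to show $(\c C_{x \to}, \preccurlyeq)$ is a lattice for each vertex $x$. My plan is to identify the simple morphisms out of $x$ with those $\gamma \preccurlyeq \Delta(x)$, and use property (2) together with flatness to show these are indexed by the vertices of $Q$ via the terminal-vertex map. Simpliciality then translates the poset of faces of a simplex containing $x$ and a pair of terminal vertices into meet and join operations on these simples, and the LMH compatibility lets these operations glue across cells into a global lattice structure on the simples. The standard Garside argument --- that a lattice on simples, together with cancellativity and a Garside element $\Delta$ whose left-divisors form a coinitial segment of $\c C_{x \to}$, yields a lattice on all of $\c C_{x \to}$ via greedy normal forms --- then completes the proof. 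Simpliciality is essential at the first step: in its absence, the face-intersection structure inside a single cell can already fail to be a lattice, as in the non-simplicial hyperplane case.
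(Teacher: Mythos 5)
Your overall skeleton matches the paper's (homogeneity, cancellativity, naturality of $\Delta$, simplicity of atoms, the lattice axiom), and two of your verifications are sound and in fact more self-contained than the paper's: the inductive proof that $f\Delta(y) \sim \Delta(x)\phi(f)$ via the two minimal-path squares is correct (the paper simply cites \cite{salvetti1993homotopy} for this, in Lemma \ref{lem:deltanat}), and your argument that atoms are simple is exactly Lemma \ref{lem:atomsaresimple}. However, there are two genuine gaps. First, your cancellativity sketch asserts that a chain of elementary equivalences from $\alpha\hat\gamma_1\beta$ to $\alpha\hat\gamma_2\beta$ can be ``rearranged'' so that the moves supported in $\alpha$ or $\beta$ separate out; but individual moves in such a chain can straddle the junctions between the three factors, and disentangling them is precisely the content of the cancellativity theorem rather than a consequence of LMH compatibility. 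The paper does not prove this either---it invokes \cite{salvetti1993homotopy} in Lemma \ref{lem:qcancel}---but your sketch presents the hard step as if it were routine.

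The more serious gap is in the lattice axiom. Your identification of the simples out of $x$ with the classes $u(x,z)$ for $z$ a vertex of $Q$ is fine (a length count plus flatness shows every left-divisor of $\Delta(x)$ is a minimal positive path). But the induced order---$u(x,z) \preccurlyeq u(x,z')$ if and only if $d(x,z') = d(x,z) + d(z,z')$---is a global ``betweenness'' order on all of $Q^{(0)}$, and the meet or join of $u(x,z_1)$ and $u(x,z_2)$ is not computed inside any single cell: $z_1$ and $z_2$ need not share a cell with each other or with $x$, so gluing face posets of simplices via LMH compatibility does not produce the lattice structure on the simples. (For genuine simplicial hyperplane arrangements this lattice property is already the core of Deligne's theorem, not a local observation.) The paper instead proves a global statement directly: Proposition \ref{prop:preglb} characterizes the principal down-sets $\{\,g : g \preccurlyeq h\,\}$ in $\c C_{x\rightarrow}$ by three closure conditions, of which only condition (3) is local (and is where simpliciality enters, via the unique 2-cell containing $y$, $y_1$, $y_2$); the forward implication is an induction on a length-maximal element of $S$, and meets and joins are then obtained by intersecting such sets (Corollaries \ref{cor:glb} and \ref{cor:lub}, the latter also requiring Proposition \ref{prop:hasupperbound} to guarantee a common upper bound exists at all). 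Your closing appeal to ``the standard Garside argument'' (a lattice on the simples propagates to the whole category) refers to a real theorem, but its essential hypothesis is exactly the statement you have not established.
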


We will prove this theorem in a series of lemmas. The first follows immediately from the definitions.

\begin{lemma} \label{lem:atomsaresimple}
    Suppose $Q$ is an involutive MH complex, and let $x,y$ be vertices of $Q$. Let $f$ be a minimal positive path from $x$ to $y$, and let $f^*$ be a minimal positive path from $y$ to $\phi(x)$. Then the concatenation $ff^*$ is a minimal positive path from $x$ to $\phi(x)$. Thus, if $Q$ is also flat, $ff^* \sim \Delta(x)$.
\end{lemma}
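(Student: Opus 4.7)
The plan is to reduce the statement to the two defining properties of an involutive complex and the definition of flatness; no substantial machinery beyond that should be needed, which is consistent with the author's remark that the lemma ``follows immediately from the definitions.''

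First I would invoke condition (2) of the involutive definition with $v = x$ and $w = y$, which gives the identity
\[
    d(x, \phi(x)) = d(x, y) + d(y, \phi(x)).
\]
Since $f$ is a minimal positive path from $x$ to $y$ and $f^*$ is a minimal positive path from $y$ to $\phi(x)$, by definition $\ell(f) = d(x,y)$ and $\ell(f^*) = d(y, \phi(x))$. The concatenation $ff^*$ is then a positive path in $\sal(Q)$ from $x$ to $\phi(x)$ of length
\[
    \ell(ff^*) = \ell(f) + \ell(f^*) = d(x,y) + d(y, \phi(x)) = d(x, \phi(x)).
\]
Because its length equals the distance in $Q^{(1)}$ between its endpoints, $ff^*$ qualifies as a minimal positive path from $x$ to $\phi(x)$, which is the first assertion.

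For the second assertion, assume additionally that $Q$ is flat. By definition of flatness, any two minimal positive paths in $\sal(Q)$ sharing initial and terminal vertices are $\sim$-equivalent. The element $\Delta(x)$ was fixed as some choice of minimal positive path from $x$ to $\phi(x)$, and by the paragraph above $ff^*$ is another such path; thus $ff^* \sim \Delta(x)$, as required.

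There is no serious obstacle in the argument; the only thing worth checking carefully is that the symbol $d$ appearing in the ``minimal positive path'' definition and the symbol $d$ appearing in the involutive axioms refer to the same combinatorial distance in $Q^{(1)}$, so that the length equation above is legitimate. Once this is noted, the statement follows by chaining the definitions.
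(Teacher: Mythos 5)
Your proof is correct and matches the paper's intent: the paper offers no written proof, stating only that the lemma ``follows immediately from the definitions,'' and your argument is precisely that immediate chain --- additivity of $d$ from condition (2) of the involutive definition, length additivity under concatenation, and flatness for the final equivalence. The point you flag about the two occurrences of $d$ is fine, since $\sal(Q)^{(0)} = Q^{(0)}$ and positive paths in $\sal(Q)$ project to edge paths of the same length in $Q^{(1)}$.
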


This implies all atoms are simple, since the atoms are paths of length $1$ and hence necessarily minimal, satisfying (2) of the definition of a Garside category. 
The next lemma follows immediately from our previous discussions, namely that if $\gamma_1 \sim \gamma_2$, then $\ell(\gamma_1) = \ell(\gamma_2)$.

\begin{lemma} \label{lem:mhhom}
    If $Q$ is an MH complex, then $\c C(Q)$ is homogeneous with length function $\ell$.
\end{lemma}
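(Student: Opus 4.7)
The plan is to verify the three ingredients in the definition of a homogeneous category for the length function $\ell$ on $\Hom(\c C(Q))$. First I would check that $\ell$ is well-defined on $\sim$-equivalence classes. This is exactly the observation the author made just before the lemma: an elementary equivalence replaces a minimal positive subpath $\hat\gamma_1$ by another minimal positive path $\hat\gamma_2$ with the same endpoints, and by the uniqueness of the distance $d(v, \tbar w(v,e))$ both subpaths have length equal to this distance. So an elementary equivalence preserves length, and by transitivity so does $\sim$. Hence $\ell([\gamma]) := \ell(\gamma)$ is a well-defined function $\Hom(\c C(Q)) \to \mathbb{Z}_{\geq 0}$.

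Next I would check additivity under composition. Composition of morphisms in $\c C(Q)$ is defined by concatenation of representatives: if $[\gamma_1]$ goes from $x$ to $y$ and $[\gamma_2]$ from $y$ to $z$, then $[\gamma_1][\gamma_2] = [\gamma_1 \gamma_2]$. Since the length of an edge path is just the number of edges traversed, $\ell(\gamma_1 \gamma_2) = \ell(\gamma_1) + \ell(\gamma_2)$, which descends to the required identity $\ell(fg) = \ell(f) + \ell(g)$.

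Finally I would check the nondegeneracy condition $\ell(f) = 0 \iff f$ trivial. The identity morphism at a vertex $x$ is represented by the empty (constant) positive path at $x$, which has length $0$. Conversely, any positive path of length $0$ consists of no edges and is therefore the constant path at some vertex, representing the identity morphism there. So an equivalence class $[\gamma]$ has $\ell([\gamma]) = 0$ exactly when $[\gamma]$ is an identity. There is no real obstacle here — the only subtle point is the first step (well-definedness of $\ell$ on $\sim$-classes), and that subtlety has already been resolved in the paragraph preceding the lemma statement. The rest is immediate from the definitions of composition and of positive path length.
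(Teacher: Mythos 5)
Your proof is correct and follows the same route as the paper, which simply notes that the lemma is immediate from the preceding observation that $\sim$-equivalent paths have equal length; your additional verification of additivity under concatenation and the nondegeneracy condition $\ell(f)=0 \iff f$ trivial just makes explicit what the paper leaves as routine.
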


The following paraphrases \cite[Thm.~31]{salvetti1993homotopy}. 

\begin{lemma} \label{lem:qcancel}
    If $Q$ is FISMH, then $\c C(Q)$ is cancellative.
\end{lemma}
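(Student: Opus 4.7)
The plan is to establish both left and right cancellation. The opposite-path map $\gamma \mapsto \gamma^\circ$ is a contravariant involution of $\c C(Q)$, satisfying $(\alpha\beta)^\circ = \beta^\circ \alpha^\circ$, and by Proposition \ref{prop:posetsiso} it descends to $\sim$-classes. This swaps left and right cancellation, so it suffices to prove one of them. I would focus on right cancellation: if $[\alpha_1][\beta] = [\alpha_2][\beta]$ in $\c C(Q)$, then $[\alpha_1]=[\alpha_2]$.

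By a standard induction on $\ell(\beta)$, writing $\beta$ as a concatenation of edges and peeling them off one at a time, the problem reduces to the case where $\beta = e$ is a single edge. So assume $\alpha_1 \cdot e \sim \alpha_2 \cdot e$ via a chain of elementary equivalences
$\alpha_1 \cdot e = \gamma_0 \to \gamma_1 \to \cdots \to \gamma_n = \alpha_2 \cdot e$,
in which each step substitutes a minimal positive subpath $\hat\gamma_i$ (from some vertex $v_i$ to $\tbar w(v_i, e_i)$ inside a cell $e_i$ of $Q$) by another minimal positive subpath $\hat\gamma_i'$ with the same endpoints. The goal is to modify this chain so that every intermediate path still terminates in $e$ and no elementary move touches this final $e$; after deleting the common terminal edge, one obtains a chain of equivalences between $\alpha_1$ and $\alpha_2$.

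The crux is a local commutation lemma: if $\hat\gamma, \hat\gamma'$ are two minimal positive paths from $v$ to $\tbar w(v, e_i)$ in a cell $e_i$, and $\hat\gamma$ ends with an edge $e$, then there is a representative of $[\hat\gamma']$ of the form $\mu' \cdot e$ with $\mu'$ minimal. Granting this, whenever an elementary move in the chain involves the terminal $e$, we first pass to a representative of the target ending in $e$ (inserting extra elementary moves that all lie in the prefix), and then the substitution is seen to take place entirely to the left of the terminal edge. Iterating, the whole chain can be rewritten so that the final $e$ is preserved throughout, proving right cancellation after its removal.

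The main obstacle—and the place where all three hypotheses enter—is this commutation lemma. Simpliciality makes each cell $e_i$ combinatorially dual to a simplex, so that minimal positive paths inside it correspond to orderings of a small, well-structured set of wall-crossings, amenable to braid-like rearrangements. The flatness hypothesis guarantees $\sim$-uniqueness of the minimal class $u(v, \tbar w(v, e_i))$, and the involutive structure identifies $\tbar w(v, e_i)$ with the unique ``antipode'' of $v$ in $e_i$, both of which can then be restricted to each simplicial cell and used there. Executing these braid-like moves inside a simplex to bring a prescribed edge to the end of a minimal positive path is where the bulk of the technical work lies; once this local step is in hand the rest of the argument is a bookkeeping induction.
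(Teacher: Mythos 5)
The paper does not actually prove this lemma from scratch: it is cited verbatim as \cite[Thm.~31]{salvetti1993homotopy}, so any self-contained proof must reproduce the substance of Salvetti's (and, behind it, Deligne's) argument. Your outer reductions are sound: the opposite-path involution $\gamma \mapsto \gamma^\circ$ reverses composition and preserves $\sim$ (the opposite of a minimal positive path from $v$ to $\tbar w(v,e)$ is a minimal positive path from $\tbar w(v,e)$ to $\tbar w(\tbar w(v,e),e)=v$), so left and right cancellation are interchangeable; and peeling off one edge at a time correctly reduces right cancellation to cancellation of a single terminal edge.

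The gap is in the crux. Your ``local commutation lemma'' asserts only that the class $[\hat\gamma']$ has a representative ending in $e$ --- but by flatness $[\hat\gamma']=[\hat\gamma]$ and $\hat\gamma$ itself ends in $e$, so the lemma is essentially vacuous and gives no leverage. The actual difficulty is global: in the chain $\alpha_1 e=\gamma_0\to\cdots\to\gamma_n=\alpha_2 e$, once a single move replaces a suffix ending in $e$ by one that does not, all subsequent intermediate paths may fail to end in $e$, and later elementary moves can overlap the modified suffix in ways that no local substitution at the earlier step controls. Replacing $\hat\gamma'$ by an equivalent $\mu'e$ changes the path on which the \emph{next} move acts, so the remainder of the chain must be transported across that change --- and proving that this transport is always possible is exactly the content of the theorem, not bookkeeping. (Note also that elementary equivalence is a symmetric relation, so one cannot invoke confluence/Newman-type arguments directly.) Deligne's proof of cancellativity \cite[Thm.~1.19]{deligne1972immeubles} and Salvetti's adaptation proceed by a genuinely different and more elaborate induction, interleaved with the divisibility results (such as Proposition \ref{prop:startingminimal}) rather than by normalizing a chain of moves. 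As written, your argument assumes the conclusion at its key step; either carry out Salvetti's induction or cite \cite[Thm.~31]{salvetti1993homotopy} as the paper does.
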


We now confirm that $\Delta$ is indeed a natural transformation.

\begin{lemma} \label{lem:deltanat}
    If $Q$ is a flat involutive MH complex, then $\Delta$ is a natural transformation from $1_{\c C(Q)}$ to $\phi$.
\end{lemma}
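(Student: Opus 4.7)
The plan is to verify the two defining properties of a natural transformation in turn. The first---that $\Delta(x)$ is a morphism from $1_{\c C(Q)}(x) = x$ to $\phi(x)$---is immediate: by definition $\Delta(x)$ is the equivalence class under $\sim$ of a minimal positive path from $x$ to $\phi(x)$, and by flatness this class does not depend on the choice of representative path in $Q$. The real content is the naturality square: for every morphism $f : x \to y$ in $\c C(Q)$, we must show $f\,\Delta(y) = \Delta(x)\,\phi(f)$.

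By Lemma \ref{lem:mhhom}, length is additive under composition, so every morphism factors as a product of atoms $f = f_1 \cdots f_k$ with $f_i : v_{i-1} \to v_i$, $v_0 = x$, $v_k = y$. If the naturality identity is known for each atom, then applying it $k$ times from the right gives
\[
f\,\Delta(y) \;=\; f_1 \cdots f_{k-1}\,\Delta(v_{k-1})\,\phi(f_k) \;=\; \cdots \;=\; \Delta(x)\,\phi(f_1) \cdots \phi(f_k) \;=\; \Delta(x)\,\phi(f),
\]
reducing the problem to the case where $f$ is an atom.

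So suppose $f$ is an atom from $x$ to $y$. Lemma \ref{lem:atomsaresimple} supplies a minimal positive path $f^*$ from $y$ to $\phi(x)$ with $f f^* \sim \Delta(x)$; in particular $\ell(f^*) = d(y, \phi(x))$. The concatenation $f^* \phi(f)$ is then a positive path from $y$ to $\phi(y)$ whose length is
\[
d(y, \phi(x)) + 1 \;=\; d(y, x) + d(x, \phi(y)) \;=\; d(y, \phi(y)),
\]
where the first equality uses that $\phi$ is an isometry of $Q^{(1)}$ (so $d(x,\phi(y)) = d(\phi(x), y) = d(y,\phi(x))$) and the second is property (2) of the involution applied at $y$ with $w = x$. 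Thus $f^* \phi(f)$ is a minimal positive path from $y$ to $\phi(y)$, and flatness forces $f^* \phi(f) \sim \Delta(y)$. Rewriting $\Delta(x)\,\phi(f)$ through the representative $f f^*$ of $\Delta(x)$ and invoking associativity of composition in $\c C(Q)$,
\[
\Delta(x)\,\phi(f) \;=\; (f f^*)\,\phi(f) \;=\; f\,(f^* \phi(f)) \;=\; f\,\Delta(y),
\]
which establishes the atom case.

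The main obstacle I anticipate is the distance identification showing that $f^* \phi(f)$ is a minimal positive path from $y$ to $\phi(y)$, since everything else is either formal (associativity, the inductive step) or directly supplied by earlier lemmas (atoms are simple, flatness collapses all minimal positive paths to a single equivalence class). Once this distance identity is secured from the involutive axioms, the rest of the argument is essentially a short substitution.
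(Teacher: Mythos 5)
Your proof is correct, but it takes a genuinely different route from the paper: the paper disposes of the naturality square in one line by citing Salvetti's Proposition 25(i) of \cite{salvetti1993homotopy}, whereas you prove it from scratch. Your reduction to atoms via homogeneity is standard and sound, and the key computation---that $f^*\phi(f)$ has length $d(y,\phi(x))+1 = d(y,x)+d(x,\phi(y)) = d(y,\phi(y))$, using that the graph automorphism $\phi$ is an isometry of $Q^{(1)}$ together with property (2) of the involution at $y$---correctly shows $f^*\phi(f)$ is minimal, so flatness forces $f^*\phi(f) \sim \Delta(y)$ and the identity $\Delta(x)\phi(f) = (ff^*)\phi(f) = f(f^*\phi(f)) = f\Delta(y)$ follows. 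What your approach buys is a self-contained argument exposing exactly which hypotheses are used (flatness to collapse minimal paths, the involutive distance axiom, and the fact that $\phi$ preserves $\sim$ so that $\phi(f)$ is a well-defined morphism of the same length); what the paper's approach buys is brevity and a pointer to the source where this identity, in Salvetti's notation, is already established. The only point worth flagging is that your argument implicitly uses that the induced action of $\phi$ on morphisms sends the class of an edge $\langle e; x\rangle$ to the class of the edge $\langle \phi(e); \phi(x)\rangle$ (so that $\phi(f)$ really is a positive edge from $\phi(x)$ to $\phi(y)$); this is how the paper defines the induced automorphism, so it is not a gap, but it deserves a sentence.
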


\begin{proof}
    First, it is clear from the definition that $\Delta(x)$ is a morphism from $x$ to $\phi(x)$. So it remains to verify that for $x,y \in \Obj(\c C(Q))$ and $x \xrightarrow{f} y$, we have
    \[
        f \Delta(y) = \Delta(x) \phi(f).
    \]
    After translating notation, this is the statement of \cite[Prop.~25(i)]{salvetti1993homotopy}.
\end{proof}

The last, and least trivial, portion of the argument is to show that $(\c C_{x \rightarrow}, \preccurlyeq)$ and $(\c C_{\rightarrow y}, \succcurlyeq)$ are lattices. To do this, we adapt arguments of Deligne \cite{deligne1972immeubles}. By Proposition \ref{prop:posetsiso}, it suffices to show that $(\c C_{x \rightarrow}, \preccurlyeq)$ is a lattice.

\textbf{In the rest of this section}, let $Q$ be FISMH and $\c C = \c C(Q)$.

\begin{prop}\label{prop:startingminimal}
    \emph{\cite[Thm.~31(iii)]{salvetti1993homotopy}}
    Let $f \in \c C_{x \rightarrow}$. Then there is a $y \in \Obj(\c C)$ satisfying the following property: $u(x,z) \preccurlyeq f$  
    if and only if $u(x,z) \preccurlyeq u(x,y)$.
\end{prop}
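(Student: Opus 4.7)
The plan is to mirror Salvetti's Theorem 31(iii) and define $y$ as the endpoint of a longest minimal positive prefix of $f$. Concretely, set
\[
Y = \{\,z \in \Obj(\c C) : u(x,z) \preccurlyeq f\,\},
\]
which contains $x$ (via the trivial path) and is finite since every $z \in Y$ satisfies $d(x,z) \leq \ell(f)$. Pick $y \in Y$ maximizing $d(x,y) = \ell(u(x,y))$. One direction of the biconditional is then immediate: if $u(x,z) \preccurlyeq u(x,y)$, transitivity of $\preccurlyeq$ together with $u(x,y) \preccurlyeq f$ gives $u(x,z) \preccurlyeq f$.

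For the nontrivial direction, assume $u(x,z) \preccurlyeq f$; the goal is to amalgamate $u(x,z)$ with $u(x,y)$ into a common minimal positive prefix of $f$. The approach is to produce a vertex $y'$ and a minimal positive path $u(x,y') \preccurlyeq f$ having both $u(x,z)$ and $u(x,y)$ as prefixes. Maximality of $d(x,y)$ then forces $d(x,y') = d(x,y)$, and flatness (which collapses all minimal positive paths between a fixed pair of vertices to a single $\sim$-class) yields $u(x,y') = u(x,y)$, hence $u(x,z) \preccurlyeq u(x,y)$ as required.

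The amalgamation is the heart of the argument, and I would proceed by induction on $\ell(f) - d(x,y)$. The base case $d(x,y) = \ell(f)$ means $f$ itself is minimal, so every prefix of $f$ is minimal and the conclusion is immediate. For the inductive step, I would identify a cell $e$ of $Q$ containing $y$ together with the next vertex along $f$, and produce $\tbar w(x,e)$ as the next natural target; the compatibility conditions (A)--(B) of Definition \ref{def:qmh}, together with Proposition 25 of \cite{salvetti1993homotopy} (the same tool already invoked in Lemma \ref{lem:deltanat}), would be used to dovetail $u(x,z)$ and $u(x,y)$ into a common minimal positive path ending at $\tbar w(x,e)$, after which the inductive hypothesis applies to the remainder of $f$.

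I expect the main obstacle to be this amalgamation step: in the abstract MH setting one lacks the rigid gallery structure of a Coxeter complex and must work entirely with the data of $\tbar w$, $\ubar w$, and $\sim$-equivalence. The \emph{simplicial} hypothesis is essential precisely here, since it guarantees that the link of each vertex is a sphere triangulation, so that two diverging minimal positive paths can always be brought back into a common minimal extension — the analog, in the pseudo-arrangement setting, of Tits's gallery-amalgamation in a Coxeter complex. Once the amalgamation lemma is secured, maximality of $y$ concludes the proof.
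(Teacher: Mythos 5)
First, note that the paper does not actually prove this proposition: it is quoted directly from \cite[Thm.~31(iii)]{salvetti1993homotopy}, so there is no in-paper argument to compare against. Your framing is nonetheless the right one --- take $y$ maximizing $d(x,y)$ over $Y = \{\,z : u(x,z)\preccurlyeq f\,\}$, observe the easy direction, and reduce the hard direction to showing that any two minimal positive prefixes of $f$ admit a common minimal positive extension that is still a prefix of $f$ --- and the bookkeeping at the end (maximality plus homogeneity forces $y'=y$, flatness identifies the classes) is correct.

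The gap is that the ``amalgamation'' step is the entire content of the proposition, and you have only described a plan for it. Two concrete problems. (1) The local statement you need --- if $g\preccurlyeq f$ ends at a vertex $v$ and $g\,u(v,v_1)\preccurlyeq f$, $g\,u(v,v_2)\preccurlyeq f$ for neighbors $v_1,v_2$ of $v$ spanning a $2$-cell $e$ with $v$, then $g\,u(v,\tbar w(v,e))\preccurlyeq f$ --- is exactly condition (3) of Proposition \ref{prop:preglb} applied to the set $S_f=\{\,g : g\preccurlyeq f\,\}$, and in this paper that condition is \emph{derived from} Proposition \ref{prop:startingminimal}. So the most natural tool for your dovetailing is logically downstream of the statement being proved, and invoking it (or anything equivalent) would be circular; an independent induction on $\ell(f)$ is required, as in \cite[Prop.~1.12--1.14]{deligne1972immeubles} or Salvetti's own proof of Theorem 31, and that induction is the genuinely delicate part. (2) The induction you do propose, on $\ell(f)-d(x,y)$, is not well-founded as stated: $d(x,y)$ is defined via the maximizer for $f$, and it is not specified what ``the remainder of $f$'' is nor why its maximizer interacts correctly with the one for $f$; neither Proposition 25 of \cite{salvetti1993homotopy} nor simpliciality of links, as invoked, supplies the missing step. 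In sum, the proposal is a correct reduction together with an unproven core lemma, not a proof.
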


The following is the main result which we use when showing $\c C_{x \rightarrow}$ is a lattice.

\begin{prop}  \label{prop:preglb}
    \emph{\cite[Prop 1.14, Cor 1.20]{deligne1972immeubles}}
    Let $x \in \Obj(\c C)$ and $S \subseteq \c C_{x \rightarrow}$ a set of morphisms starting at $x$ with bounded length (meaning there is some $N$ so that $\ell(f) \leq N$ for all $f \in S$). Then $S$ satisfies
    \begin{enumerate}
        \item $\mathrm{id}_x \in S$,
        \item If $g \in S$ and $f \preccurlyeq g$, then $f \in S$; and
        \item Suppose $x \xrightarrow{g} y$ and $y_1,y_2$ are vertices adjacent to $y$ in $Q$. Let $e \in \c Q$ be the (unique) 2-cell containing $y$, $y_1$, and $y_2$ (this exists since $Q$ is simplicial). Let $z = \tbar w(y,e)$. If $g u(y,y_i) \in S$ for $i=1,2$, then $g u(y, z) \in S$.
    \end{enumerate}
    if and only if there is a (unique) morphism $h \in \c C_{x \rightarrow}$ such that
    \begin{align*}
        S = \{\,g \in \c C_{x \rightarrow} : g \preccurlyeq h\,\}
    \end{align*}
\end{prop}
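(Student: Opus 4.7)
The direction $(\Leftarrow)$ is a direct verification. Conditions (1) and (2) are immediate from the definition of $\preccurlyeq$ and its transitivity. For (3), suppose $gu(y,y_1), gu(y,y_2) \preccurlyeq h$; writing $h = g \cdot h'$ and cancelling, we obtain $u(y,y_i) \preccurlyeq h'$ for $i=1,2$. In a flat MH complex, a morphism from $y$ admitting representatives that begin with both $u(y,y_1)$ and $u(y,y_2)$ must admit one beginning with $u(y,z)$; this is a ``local join'' property following from the elementary equivalence moves around the 2-cell $e$ of $Q$ containing $y, y_1, y_2$, combined with flatness to pin down $u(y,z)$. Hence $u(y,z) \preccurlyeq h'$, i.e., $gu(y,z) \preccurlyeq h$.

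For the main direction $(\Rightarrow)$, I would adapt Deligne's argument from \cite[Prop.~1.14, Cor.~1.20]{deligne1972immeubles} to the FISMH setting. The strategy is to show that $S$ has a unique $\preccurlyeq$-maximal element $h$; once this is established, the inclusion $\{g : g \preccurlyeq h\} \subseteq S$ follows from (2), while the reverse inclusion follows because every $f \in S$ admits a $\preccurlyeq$-maximal extension within $S$ (e.g., any length-maximizer, which exists by boundedness), and this extension must equal $h$ by uniqueness. So the crux is uniqueness of the maximal element.

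To prove uniqueness, I would suppose $h_1 \neq h_2$ are distinct $\preccurlyeq$-maximal elements of $S$ and construct inductively a sequence $h_2 = h_2^{(1)}, h_2^{(2)}, \ldots$ in $S$ so that the longest common prefix $p^{(k)}$ of $h_1$ and $h_2^{(k)}$ strictly grows in length. Given $h_2^{(k)} \neq h_1$, use cancellativity to factor $h_1 = p^{(k)} \cdot \alpha_k \cdot r_k$ and $h_2^{(k)} = p^{(k)} \cdot \beta_k \cdot s_k$ with $\alpha_k \neq \beta_k$ atoms at the endpoint $y_k$ of $p^{(k)}$, going to vertices in a common 2-cell $e_k$ (by the simplicial hypothesis). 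Condition (3) yields $p^{(k)} \cdot u(y_k, z_k) \in S$ with $z_k = \tbar w(y_k, e_k)$; set $h_2^{(k+1)}$ to be any $\preccurlyeq$-maximal extension of $p^{(k)} \cdot u(y_k, z_k)$ in $S$. By flatness $u(y_k, z_k) \succcurlyeq \alpha_k$, so $h_2^{(k+1)} \succcurlyeq p^{(k)} \cdot \alpha_k$ and thus $\ell(p^{(k+1)}) > \ell(p^{(k)})$. Since prefix lengths are bounded by $\ell(h_1)$, the sequence terminates at some $k^*$ with $h_1 \preccurlyeq h_2^{(k^*)}$, and maximality of $h_1$ forces $h_1 = h_2^{(k^*)}$. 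The construction also gives $h_2^{(k^*)} \succcurlyeq p^{(k^*-1)} \cdot \beta_{k^*-1}$ (using $u(y_{k^*-1}, z_{k^*-1}) \succcurlyeq \beta_{k^*-1}$ by flatness), so $p^{(k^*-1)} \cdot \beta_{k^*-1} \preccurlyeq h_1$, contradicting that $p^{(k^*-1)}$ was the longest common prefix of $h_1$ and $h_2^{(k^*-1)}$.

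The main obstacle is the iteration argument and its termination: each step must genuinely enlarge the common prefix, which crucially requires flatness (to ensure $u(y_k, z_k)$ admits representatives starting with each atom $\alpha_k, \beta_k$ at $y_k$) and the simplicial hypothesis (to identify the 2-cell $e_k$ and its opposite vertex $z_k$). The careful bookkeeping of common prefixes together with maximality of $h_1$ yielding the final contradiction is the technical heart of the argument, and is precisely where the full FISMH hypotheses are deployed simultaneously.
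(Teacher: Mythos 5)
Your overall strategy is sound and is essentially the Deligne-style argument the paper follows, though the forward direction is organized differently enough to be worth comparing. For $(\Rightarrow)$, the paper fixes a single length-maximal $h \in S$ and shows directly that $S = \{g : g \preccurlyeq h\}$ by proving that any ``violation'' ($g \preccurlyeq h$, $gu(y,y_1)\in S$, but $gu(y,y_1)\not\preccurlyeq h$) can be promoted to a strictly longer violation, which is impossible since every such $g$ satisfies $\ell(g)\le \ell(h)$. You instead prove uniqueness of $\preccurlyeq$-maximal elements by growing the common prefix of two putative distinct maximal elements. Both inductions run on the same engine --- condition (3) applied to the two distinct atoms leaving the end of the current prefix, plus flatness and the QMH identity $d(y,z)=d(y,y_i)+d(y_i,z)$ to see that $u(y,z)$ has a representative beginning with each atom --- and both terminate by boundedness; yours is a legitimate variant. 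Two small cautions: your $\succcurlyeq$ conflicts with the paper's convention ($g\succcurlyeq f$ means $f$ is a \emph{suffix} of $g$; you mean $\alpha_k\preccurlyeq u(y_k,z_k)$ and $p^{(k)}\alpha_k\preccurlyeq h_2^{(k+1)}$), and you should note explicitly that $k^*\ge 2$ (the first step can always be performed because $h_1\ne h_2$ are both maximal, so neither is a prefix of the other), which is what makes the terminal contradiction with $p^{(k^*-1)}$ available.

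The one substantive gap is in $(\Leftarrow)$: the ``local join property'' --- that $u(y,y_1)\preccurlyeq f$ and $u(y,y_2)\preccurlyeq f$ together force $u(y,z)\preccurlyeq f$ --- does not follow in any immediate way from the elementary equivalence moves around the $2$-cell $e$; it is a genuine confluence statement about the relation $\sim$, and asserting it outright is close to assuming the lattice property one is trying to establish. It is exactly what Proposition \ref{prop:startingminimal} (Salvetti's Thm.~31(iii)) supplies: for $f$ there is a vertex $y_0$ with $u(y,w)\preccurlyeq f$ if and only if $u(y,w)\preccurlyeq u(y,y_0)$, and since $y_1,y_2$ are distinct vertices of $e$ one deduces $u(y,z)\preccurlyeq u(y,y_0)\preccurlyeq f$. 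You should invoke that result (as the paper does) rather than assert the property; as written, this step of your argument is unjustified.
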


For the forward implication, we mimic the proof of Proposition 1.14 of \cite{deligne1972immeubles}; for the reverse, we mimic the proof of Corollary 1.20 of \cite{deligne1972immeubles}.

\begin{proof}
    Suppose (1), (2), and (3) are satisfied. Let $h \in S$ with maximal length (this exists since $S$ has bounded length). We claim this $h$ works. It suffices to show that
    \begin{enumerate}
        \item[($*$)] Suppose $x \xrightarrow{g} y$ and $y_1 \in \Obj(\c C)$ is adjacent to $y$ such that (a) $g \preccurlyeq h$ but (b) $g u(y,y_1) \not\preccurlyeq h$ and $g u(y,y_1) \in S$. Then there exists $g'$, $y'$, and $y'_1$ satisfying (a) and (b) with $g'$ strictly longer than $g$.
    \end{enumerate}
    Since $g u(y,y_1) \in S$, the maximality of $\ell(h)$ implies $g \not= h$. Thus there is some $y_2 \not= y_1$ adjacent to $y$ such that $g u(y,y_2) \preccurlyeq h$. Letting $e$ be the 2-cell of $Q$ containing the vertices $y,y_1,y_2$ and $z = \tbar w(y,e)$, this implies that $g u(y,z) \in S$ by (3).
    Note that since $g u(y,y_1) \preccurlyeq g u(y,z)$, we must have that $g u(y,z) \not\preccurlyeq h$.

    Consider $u(y,z)$. Let $y_0$ be a vertex of $e$ so that $u(y,y_2) \preccurlyeq u(y,y_0)$, and $u(y,y_0)$ has the maximal length among paths $p \preccurlyeq u(y,z)$ satisfying $gp \preccurlyeq h$. Let $g' = g u(y,y_0)$. Let $y'$ be the vertex of $u(y',z)$ adjacent to $y'$. Then $g'$ and $y'$ satisfy (a) and (b) above. Uniqueness of $h$ is immediate.

    Now suppose $S = \{\,g \in \c C_{x \rightarrow} : g \preccurlyeq h\,\}$ for some $h$. (1) and (2) are clear, so we show that $S$ satisfies (3). Suppose $g,y,y_i,z$ are as given in (3). We must show that $g u(y,z) \in S$. 

    Suppose $g \in S$, say $h = gf$ for some morphism $f$ starting at $y$. Then by cancellation we must have that $u(y,y_i) \preccurlyeq f$ for $i=1,2$.
    Let $y_0$ be the vertex guaranteed to exist by Proposition \ref{prop:startingminimal} for $f$, so that in particular $u(y,y_0) \preccurlyeq f$.
    One may easily verify that because $y_1$ and $y_2$ are distinct vertices of $e$ and $u(y,y_i) \preccurlyeq f$, we must have that $y_0 = \tbar w(y,e) = z$. 
\end{proof}

From this, the following is immediate.

\begin{cor} \label{cor:glb}
    Every pair in $\c C_{x \rightarrow}$ has a unique greatest lower bound under $\preccurlyeq$.
\end{cor}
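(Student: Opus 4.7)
The plan is to obtain the greatest lower bound of $f_1, f_2 \in \c C_{x \rightarrow}$ by applying Proposition \ref{prop:preglb} to the intersection of the down-sets of $f_1$ and $f_2$. Concretely, for $i = 1, 2$, let
\[
S_i = \{\, g \in \c C_{x \rightarrow} : g \preccurlyeq f_i \,\},
\]
and set $S = S_1 \cap S_2$. By the reverse implication of Proposition \ref{prop:preglb} (applied with $h = f_i$), each $S_i$ satisfies conditions (1)--(3) of that proposition. I claim $S$ inherits these properties and that the associated morphism $h$ produced by the forward implication is the desired greatest lower bound.

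First I would check boundedness: since $\c C$ is homogeneous by Lemma \ref{lem:mhhom}, every $g \in S$ satisfies $\ell(g) \leq \min(\ell(f_1), \ell(f_2))$. Conditions (1) and (2) of Proposition \ref{prop:preglb} transfer trivially: $\mathrm{id}_x \preccurlyeq f_i$ for both $i$, and if $g \in S$ and $f \preccurlyeq g$, then $f \preccurlyeq g \preccurlyeq f_i$ for each $i$, so $f \in S$. For condition (3), suppose $x \xrightarrow{g} y$, vertices $y_1, y_2$ are adjacent to $y$, $e$ is the unique $2$-cell of $Q$ containing $y, y_1, y_2$, and $z = \tbar w(y, e)$; if $g\,u(y, y_i) \in S$ for $i = 1, 2$, then $g\,u(y, y_i) \in S_j$ for each $j$, so by property (3) for $S_j$ we get $g\,u(y, z) \in S_j$, and hence $g\,u(y, z) \in S$.

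The forward implication of Proposition \ref{prop:preglb} then supplies a unique $h \in \c C_{x \rightarrow}$ with $S = \{\, g : g \preccurlyeq h \,\}$. Since $h \preccurlyeq h$, we have $h \in S$, hence $h \preccurlyeq f_1$ and $h \preccurlyeq f_2$, so $h$ is a common lower bound. Conversely, any common lower bound $g$ of $f_1, f_2$ lies in $S_1 \cap S_2 = S$, so $g \preccurlyeq h$. This exhibits $h$ as the greatest lower bound, and uniqueness of greatest lower bounds in a poset is automatic.

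I do not anticipate serious obstacles here: the heavy lifting has already been done in Proposition \ref{prop:preglb}, which encodes exactly the closure properties one needs to recognize a principal down-set. The only point requiring a bit of care is verifying condition (3) for $S$, but this is immediate from the fact that the $2$-cell $e$ (and hence the vertex $z$) is determined intrinsically by $y, y_1, y_2$ and the simplicial structure of $Q$, so the same cell works simultaneously for $S_1$ and $S_2$.
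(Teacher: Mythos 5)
Your proposal matches the paper's proof of Corollary \ref{cor:glb} essentially verbatim: both form the intersection $S = S_{f_1} \cap S_{f_2}$ of the two principal down-sets, observe that boundedness and conditions (1)--(3) of Proposition \ref{prop:preglb} are inherited by the intersection, and then invoke the forward implication to extract the morphism $f_1 \wedge f_2$ whose down-set is $S$. The argument is correct; your explicit verification of condition (3) is just a more detailed spelling-out of what the paper asserts in one line.
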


\begin{proof}
Let $f,g \in \c C_{x \rightarrow}$ and let
\begin{align*}
    S_f &= \{\,h \in \c C_{x \rightarrow} : h \preccurlyeq f \,\} \\
    S_g &= \{\,h \in \c C_{x \rightarrow} : h \preccurlyeq g \,\} \\
    S &= S_f \cap S_g.
\end{align*}
It is clear that $S$ has bounded length and satisfies (1), (2), and (3) of Proposition \ref{prop:preglb} since both $S_f$ and $S_g$ do. Thus by Proposition \ref{prop:preglb} there is a unique element, which we denote $f \wedge g$ such that
\begin{align*}
    S = \{\,h \in \c C_{x \rightarrow} : h \preccurlyeq f \wedge g \,\}.
\end{align*}
This $f \wedge g$ is clearly a unique greatest lower bound for the pair $f,g$ under $\preccurlyeq$.
\end{proof}

In order to show that $\c C_{x \rightarrow}$ contains joins, we need the following

\begin{prop} \label{prop:hasupperbound}
    \emph{\cite[Prop.~25(ii)]{salvetti1993homotopy}} If $Q$ is a flat involutive MH complex and $c \in \c C_{x \rightarrow y}$, then if $f \in \c C_{x \rightarrow}$ such that
    \[
        f = u(x, x_1) u(x_1,x_2) \dots u(x_{n-1},x_n)
    \]
    for some objects $x_i$, we have that $f \preccurlyeq c\Delta^n$ (where $\Delta = \Delta_y$ or $\Delta^y$ depending on its place in the product).
\end{prop}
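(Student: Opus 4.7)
My plan is to prove this by induction on $n$. The base case $n = 0$ is immediate since $f = \mathrm{id}_x$ and $c\Delta^0 = c$.

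For the inductive step, I would factor $f = f_0 f'$, where $f_0 = u(x, x_1)$ and $f' = u(x_1, x_2) \cdots u(x_{n-1}, x_n)$ is a product of $n-1$ minimal positive paths starting at $x_1$. Applying naturality of $\Delta$ (Lemma \ref{lem:deltanat}) once to rewrite the leading $\Delta$ in $c\Delta^n$ yields
\[
    c\Delta^n \;=\; \Delta_x \cdot \phi(c) \cdot \Delta_{\phi(y)} \Delta_{\phi^2(y)} \cdots \Delta_{\phi^{n-1}(y)}.
\]
By Lemma \ref{lem:atomsaresimple} (whose conclusion requires flatness), the minimal positive path $f_0$ is simple: there exists a minimal positive path $f_0^* \colon x_1 \to \phi(x)$ with $f_0 f_0^* = \Delta_x$ in $\c C$. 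Setting $c' := f_0^* \phi(c) \colon x_1 \to \phi(y)$, the above becomes
\[
    c\Delta^n \;=\; f_0 \cdot c' \cdot \Delta_{\phi(y)} \cdots \Delta_{\phi^{n-1}(y)} \;=\; f_0 \cdot \bigl(c' \Delta^{n-1}\bigr),
\]
where the final ``$\Delta^{n-1}$'' denotes the composition of $n-1$ $\Delta$'s starting at $\phi(y)$. Cancellativity (Lemma \ref{lem:qcancel}) then reduces the desired statement $f_0 f' \preccurlyeq c\Delta^n$ to $f' \preccurlyeq c' \Delta^{n-1}$. Since $f'$ is a product of exactly $n-1$ minimal positive paths starting at $x_1$, this is exactly the inductive hypothesis applied with data $(x_1, c', n-1)$.

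The main difficulty I foresee is purely bookkeeping: one must carefully track the shifting sources and targets as $\Delta$ gets moved under $\phi$, and verify that the factorization $\Delta_x = f_0 f_0^*$ coming from Lemma \ref{lem:atomsaresimple} genuinely produces a valid inductive setup (in particular, that $c'$ indeed lands in $\c C_{x_1 \to \phi(y)}$ so the hypothesis applies). Once this is arranged, the argument is a clean combination of naturality, simpleness of atoms, and cancellativity --- all established in the preceding lemmas --- with the flatness assumption entering only through Lemma \ref{lem:atomsaresimple}.
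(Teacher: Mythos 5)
Your argument is correct, but note that the paper does not prove this proposition at all: it is imported wholesale as a citation of Salvetti's Prop.~25(ii), so there is no in-paper proof to compare against. Your induction is a clean, self-contained reconstruction: peel off $f_0 = u(x,x_1)$, use naturality of $\Delta$ to rewrite $c\Delta^n = \Delta_x\,\phi(c)\,\Delta_{\phi(y)}\cdots\Delta_{\phi^{n-1}(y)}$, complete $f_0$ to $\Delta_x = f_0 f_0^*$ via Lemma~\ref{lem:atomsaresimple} (which needs the existence of a minimal positive path $x_1 \to \phi(x)$, guaranteed by the remark after the definition of minimal positive paths, and involutivity condition~(2) to see the concatenation is minimal), and recurse on $c' = f_0^*\phi(c) \in \c C_{x_1 \rightarrow \phi(y)}$. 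Two small remarks: the step from $f' \preccurlyeq c'\Delta^{n-1}$ to $f_0 f' \preccurlyeq f_0(c'\Delta^{n-1})$ needs no cancellativity --- left-composition preserves $\preccurlyeq$ directly from the definition, so Lemma~\ref{lem:qcancel} (which moreover assumes the stronger FISMH hypothesis, not available here) should not be invoked; and flatness enters not only through Lemma~\ref{lem:atomsaresimple} but also through Lemma~\ref{lem:deltanat}, since naturality of $\Delta$ is itself stated for flat involutive complexes. With those adjustments the argument stands on the lemmas the paper actually establishes for flat involutive MH complexes.
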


\begin{cor} \label{cor:lub}
    Every pair in $\c C_{x \rightarrow}$ has a unique least upper bound under $\preccurlyeq$.
\end{cor}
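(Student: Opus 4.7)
The plan is to deduce the existence of least upper bounds in $(\c C_{x \rightarrow}, \preccurlyeq)$ from the greatest lower bounds supplied by Corollary \ref{cor:glb} combined with the existence of common upper bounds, which Proposition \ref{prop:hasupperbound} guarantees. Given $f, g \in \c C_{x \rightarrow}$, I would first factor $f$ as a product of atoms $u(x, x_1) u(x_1, x_2) \cdots u(x_{n-1}, x_n)$ with $n = \ell(f)$, and then apply Proposition \ref{prop:hasupperbound} with $c = g$ to obtain $f \preccurlyeq g \Delta^n$. Since trivially $g \preccurlyeq g \Delta^n$, the morphism $b := g \Delta^n$ is a common upper bound for $f$ and $g$.

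Next, set $S = \{\, h \in \c C_{x \rightarrow} : f \preccurlyeq h \text{ and } g \preccurlyeq h \,\}$ and $S_b = \{\, h \in S : h \preccurlyeq b \,\}$. The subset $S_b$ is nonempty since $b \in S_b$, and both $S$ and $S_b$ are closed under the binary meet $\wedge$ provided by Corollary \ref{cor:glb}: if $h_1, h_2 \in S$, then $f$ is a common lower bound of $h_1, h_2$, forcing $f \preccurlyeq h_1 \wedge h_2$, and likewise $g \preccurlyeq h_1 \wedge h_2$. By homogeneity (Lemma \ref{lem:mhhom}), every $h \in S_b$ satisfies $\ell(h) \leq \ell(b)$, so every strictly $\prec$-descending chain in $S_b$ terminates. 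Hence $S_b$ contains a $\preccurlyeq$-minimal element $m$.

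I would then check that this $m$ is in fact a minimum of the entire set $S$: for any $w \in S$, the meet $m \wedge w$ lies in $S$ and satisfies $m \wedge w \preccurlyeq m \preccurlyeq b$, so $m \wedge w \in S_b$. Minimality of $m$ in $S_b$ then forces $m \wedge w = m$, i.e., $m \preccurlyeq w$, so $m$ is the least upper bound of $f$ and $g$. Uniqueness is immediate from antisymmetry of $\preccurlyeq$, which was already built into $\c C_{x \rightarrow}$ being a poset via the cancellativity of Lemma \ref{lem:qcancel} and the length function of Lemma \ref{lem:mhhom}. I do not anticipate a serious obstacle here, since the one nontrivial input---producing a common upper bound for an arbitrary pair---is exactly the content of Proposition \ref{prop:hasupperbound}, and the remainder is purely order-theoretic manipulation exploiting meets and bounded chain length.
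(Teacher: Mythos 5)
Your proof is correct, but it takes a genuinely different route from the paper's. The paper re-invokes Proposition \ref{prop:preglb}: it lets $\c S$ be the collection of all bounded-length sets containing $f$ and $g$ and satisfying conditions (1)--(3), notes $\c S \neq \varnothing$ because $\{\,h : h \preccurlyeq \Delta^n\,\}$ (with $n$ coming from Proposition \ref{prop:hasupperbound}) lies in $\c S$, intersects everything in $\c S$ to get $S_0$, and reads off $f \vee g$ as the morphism whose down-set is $S_0$; leastness follows because the down-set of any other upper bound belongs to $\c S$ and hence contains $S_0$. You instead use Proposition \ref{prop:hasupperbound} only to produce the single common upper bound $b = g\Delta^{\ell(f)}$, and then run a purely order-theoretic argument on top of Corollary \ref{cor:glb}: the set of common upper bounds is closed under the meet $\wedge$, the length function bounds strictly descending chains below $b$, so a minimal element of $S_b$ exists, and meet-closure promotes that minimal element to a minimum of all of $S$. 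This is the standard fact that a meet-semilattice with pairwise upper bounds and the descending chain condition is a lattice. Each step you take checks out (including the reduction of $f$ to a product of atoms, which are length-one minimal paths, so Proposition \ref{prop:hasupperbound} applies with $c = g$). What the paper's version buys is uniformity---the join is produced by the same machine (Proposition \ref{prop:preglb}) as the meet, and the characterization of $S_0$ as a principal down-set is immediate; what your version buys is independence from the combinatorial conditions (1)--(3) in the join step, making the argument a clean, reusable piece of lattice theory once meets and a common upper bound are in hand.
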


\begin{proof}
Let $f,g \in \c C_{x \rightarrow}$. Proposition \ref{prop:hasupperbound} shows that there exists a common upper bound of $f$ and $g$: if $n_f,n_g$ are the integers satisfying $f \preccurlyeq \Delta^{n_f}$ and $g \preccurlyeq \Delta^{n_g}$, and if $n = \max\{n_f,n_g\}$ then both $f \preccurlyeq \Delta^n$ and $g \preccurlyeq \Delta^n$.

Let $\c S$ be the collection of all sets $S$ of bounded length with $f,g \in S$ and satisfying (1), (2), and (3) of Proposition \ref{prop:preglb}. Note that $\c S$ is non-empty, since 
\[
    \{\,h \in \c C_{x \rightarrow} : h \preccurlyeq \Delta^n \,\}
\]
is contained in $\c S$.
 Let $S_0 = \bigcap_{S \in \c S} S$. Then $S_0$ has bounded length, satisfies (1),(2),(3), and contains $f,g$ since each $S \in \c S$ satisfies each of these.
By Proposition \ref{prop:preglb}, there is a unique morphism, which we call $f \vee g$, such that
\begin{align*}
    S_0 = \{\, h \in \c C_{x \rightarrow} : h \preccurlyeq f \vee g \,\}.
\end{align*}
Certainly $f \preccurlyeq f \vee g$ and $g \preccurlyeq f \vee g$, so $f \vee g$ is an upper bound of $f$ and $g$. If $u$ is another upper bound of $f$ and $g$, then
\begin{align*}
    S' \coloneqq \{\, h \in \c C_{x \rightarrow} : h \preccurlyeq u \,\} \in \c S,
\end{align*}
implying $S_0 \subseteq S'$, and in particular $f \vee g \in S'$, hence $f \vee g \preccurlyeq u$. 
\end{proof}

We can now conclude by proving Theorem \ref{thm:fismhgarside}.

\begin{proof}[Proof (of Theorem \ref{thm:fismhgarside})]
    We have seen that $\phi$ is an automorphism of $\c C(Q)$. Lemma \ref{lem:deltanat} shows that $\Delta$ is a natural transformation from $1_{\c C(Q)}$ to $\phi$. Lemma \ref{lem:mhhom} shows that $\c C(Q)$ is homogeneous and Lemma \ref{lem:qcancel} shows that $\c C(Q)$ is cancellative. Lemma \ref{lem:atomsaresimple} implies that all atoms of $\c C(Q)$ are simple. Corollaries \ref{cor:glb} and \ref{cor:lub} imply that $(\c C(Q)_{x \rightarrow}, \preccurlyeq)$ is a lattice, and their dual statements (and proofs) imply that $(\c C(Q)_{\rightarrow y}, \succcurlyeq)$ is a lattice. Therefore $\c C(Q)$ is a Garside category.
\end{proof}

The following is the main consequence that we're interested in.

\begin{cor} \label{thm:salcpxgarside}
    If $Q$ is FISMH, then for any vertex $x$, $\pi_1(\sal(Q),x) \cong \c G_{x}$. In particular, $\pi_1(\sal(Q))$ is a weak Garside group.
\end{cor}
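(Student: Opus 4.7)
The plan is first to invoke Theorem \ref{thm:fismhgarside} to produce a Garside groupoid $\c G$ associated to $\c C(Q)$, and then to identify its isotropy groups with $\pi_1(\sal(Q), x)$; the ``in particular'' clause is then tautological from the definition of a weak Garside group. By Theorem \ref{thm:fismhgarside}, $\c C(Q)$ is a Garside category, hence embeds into a Garside groupoid $\c G$ obtained by formally inverting every morphism, and each isotropy group $\c G_x$ is by definition a weak Garside group.

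To construct the required isomorphism $\c G_x \cong \pi_1(\sal(Q), x)$, I would compare presentations. Both $\c G$ and the fundamental groupoid $\Pi_1(\sal(Q))$ have object set $Q^{(0)} = \sal(Q)^{(0)}$, and both are generated by the oriented 1-cells of $\sal(Q)$: in $\c G$ these appear as the atoms of $\c C(Q)$ (positively oriented edges) together with their formal inverses, while in $\Pi_1(\sal(Q))$ they are the standard edge generators of the fundamental groupoid of a CW complex. The relations in $\c G$ are the elementary equivalences defining $\sim$, while those of $\Pi_1(\sal(Q))$ are the boundary words of 2-cells. Proposition \ref{prop:summaryofX} expresses the boundary of each 2-cell $\langle e; v \rangle$ of $\sal(Q)$ (for $e$ a 2-cell of $Q$) as a cycle of 1-cells that decomposes into two minimal positive paths from $v$ to $\tbar w(v, e)$; by flatness of $Q$ these paths are $\sim$-equivalent, so every such 2-cell relation in $\Pi_1(\sal(Q))$ corresponds to an elementary equivalence in $\c C(Q)$.

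The main obstacle is the converse direction: verifying that every elementary equivalence in $\c C(Q)$ is already a consequence of the 2-cell relations of $\sal(Q)$, so that the two presentations genuinely coincide. This is immediate when the cell $e$ in the definition of elementary equivalence has dimension 2. For higher-dimensional cells $e$ of $Q$, I would argue by induction on $\dim e$, using the simplicial hypothesis on $Q$ to decompose any equivalence of minimal positive paths inside $e$ into a sequence of equivalences taking place on proper 2-subfaces, each of which is realized by a 2-cell of $\sal(Q)$; the lattice structure given by Corollaries \ref{cor:glb} and \ref{cor:lub} provides the local replacements needed for the induction. Alternatively, one may appeal directly to \cite{salvetti1993homotopy}, where $\sal(Q)$ is constructed precisely so that its 2-skeleton is a presentation complex matching the relations of $\c C(Q)$. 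Either approach yields a groupoid isomorphism $\c G \cong \Pi_1(\sal(Q))$, and in particular the desired $\c G_x \cong \pi_1(\sal(Q), x)$.
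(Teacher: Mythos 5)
Your proposal is correct and follows essentially the same route as the paper: the paper's entire proof consists of citing \cite[Cor.~18]{salvetti1993homotopy}, which identifies the relations of $\pi_1(\sal(Q))$ with the words $\gamma_1\gamma_2^{-1}$ for $\sim$-equivalent positive paths $\gamma_1,\gamma_2$, so that $\pi_1(\sal(Q),x)$ is the localization of $\c C(Q)$ at $x$, i.e.\ $\c G_x$ --- exactly your presentation-matching argument, with your ``appeal directly to Salvetti'' fallback being what the paper actually does. One caution: your primary self-contained route (induction on $\dim e$) is where all the substance of Salvetti's result lives and is not routine --- in particular, the minimal positive paths $\hat\gamma_i$ appearing in an elementary equivalence are minimal paths in all of $\sal(Q)$ and need not stay in the subcomplex lying over the cell $e$, so the reduction to $2$-cells cannot be done cell-by-cell as sketched.
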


This follows from

\begin{lemma}
    \emph{\cite[Cor.~18]{salvetti1993homotopy}} If $Q$ is an MH complex, then all relations for $\pi_1(\sal(Q))$ come from the set $\{\gamma_1 \gamma_2^{-1} : \gamma_1, \gamma_2 \text{ are positive paths with the same endpoints}\,\}$.
\end{lemma}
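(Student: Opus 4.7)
The plan is to extract a standard CW presentation of $\pi_1(\sal(Q), x)$ and verify that each 2-cell of $\sal(Q)$ contributes a defining relator of the required form; any relation in $\pi_1$ is then a consequence of such relators.

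Fix a spanning tree $T$ of the oriented 1-skeleton $\sal(Q)^{(1)}$. The CW presentation exhibits $\pi_1(\sal(Q), x)$ as the free group on the edges of $\sal(Q)^{(1)} \setminus T$, modulo the based cyclic boundaries of the 2-cells $\langle e; v\rangle$ with $e \in \c Q^{(2)}$ and $v \in V(e)$. By Proposition \ref{prop:summaryofX},
\[
\partial \langle e; v\rangle = \bigcup_{e' \in \c Q(e) \setminus \{e\}} \langle e'; \ubar w(v, e')\rangle,
\]
and since $Q$ is regular, $\partial e$ is a simple polygonal cycle $v = v_0, v_1, \dots, v_k = v$ in $Q^{(1)}$. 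The distance sequence $d(v, v_j)$ has consecutive differences of exactly $\pm 1$: the triangle inequality in $Q^{(1)}$ rules out gaps larger than one, while the uniqueness clause of QMH axiom (ii) applied to the edge from $v_j$ to $v_{j+1}$ rules out equality. Combined with the uniqueness of $\tbar w(v, e)$ as the farthest vertex of $e$, this forces the sequence to climb strictly from $0$ to its maximum (attained at $\tbar w(v, e)$) and then descend strictly back.

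Consequently $\partial e$ splits into two distance-monotonic arcs from $v$ to $\tbar w(v, e)$. On each arc, every edge $e'$ has $\ubar w(v, e')$ equal to its earlier endpoint, so the 1-cell $\langle e'; \ubar w(v, e')\rangle$ in $\sal(Q)$ is oriented from the earlier vertex to the later. Each arc therefore lifts to a positive edge path $\gamma_i$ in $\sal(Q)$ from $v$ to $\tbar w(v, e)$, and reading $\partial \langle e; v\rangle$ as a based loop yields the relator $\gamma_1 \gamma_2^{-1}$. Since these are all the defining relators in the presentation, every relation in $\pi_1(\sal(Q))$ lies in the normal closure of $\{\,\gamma_1 \gamma_2^{-1} : \gamma_1, \gamma_2 \text{ positive paths with the same endpoints}\,\}$, as required.

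The main obstacle is the monotonicity claim for the distance sequence along $\partial e$; this is purely combinatorial and rests on the uniqueness in QMH axiom (ii), but requires care because a polygonal cycle could a priori oscillate with multiple local maxima and minima. Once this is established, identifying $\partial \langle e; v\rangle$ as $\gamma_1 \gamma_2^{-1}$ is a direct reading of Proposition \ref{prop:summaryofX} through the chosen orientation of $\sal(Q)$.
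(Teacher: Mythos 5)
Your overall strategy---read off the CW presentation of $\pi_1(\sal(Q))$ and check that each $2$-cell $\langle e;v\rangle$ contributes a relator of the form $\gamma_1\gamma_2^{-1}$ with $\gamma_1,\gamma_2$ positive---is the right one (the paper itself only cites Salvetti for this lemma and supplies no argument). But the justification of your key monotonicity claim has a genuine gap. You assert that the $\pm 1$ steps together with the \emph{uniqueness} of $\tbar w(v,e)$ force the distance sequence along $\partial e$ to be bitonic. That implication is false: a $\pm 1$ sequence on a cycle can have a unique maximum and a unique minimum and still oscillate, e.g.\ $(0,1,2,3,2,1,2,1)$ on an octagon. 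Moreover this genuinely occurs in a QMH complex: take the $1$-skeleton of the $3$-cube and attach a single $2$-cell $e$ along a Hamiltonian $8$-cycle such as $000,100,101,111,110,010,011,001$. All QMH axioms hold (bipartiteness rules out ties on edges, and the cube's antipodal map $\sigma$ satisfies $d(u,w)+d(w,\sigma(u))=3$ for all $w$, which yields (ii), (A) and (B) for the $2$-cell), yet the distance sequence from $000$ around the attached octagon is exactly $(0,1,2,3,2,1,2,1)$. The boundary of the corresponding $2$-cell of $\sal(Q)$ then reads $\alpha\beta^{-1}\gamma\delta^{-1}$ with four changes of direction, not $\gamma_1\gamma_2^{-1}$, and such a relator does not obviously lie in the normal closure of the stated set.

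What saves the lemma is the hypothesis you never used: $Q$ is MH, not merely QMH. The MH axiom forces $\ubar w(v,e')=\ubar w_e(v,e')$ and $\tbar w(v,e)=\tbar w_e(v,e)$ for $v\in V(e)$ and $e'\in\c Q(e)$, where $\ubar w_e$ is computed from the \emph{intrinsic} metric of the cell $e$, i.e.\ from the cycle distance on $\partial e$. The cycle distance from $v$ around a $2n$-gon is automatically bitonic, $(0,1,\dots,n,n-1,\dots,1)$, so each boundary $1$-cell $\langle e';\ubar w(v,e')\rangle$ of $\langle e;v\rangle$ is oriented away from $v$ on one arc to the cycle-antipode $\tbar w_e(v,e)$ and toward it on the other; this is exactly the decomposition $\gamma_1\gamma_2^{-1}$ you need. (The cube example above is not MH precisely because $\tbar w(000,e)\neq\tbar w_e(000,e)$, in the spirit of the paper's Figure 2.) With the MH axiom substituted for your appeal to uniqueness, the remainder of your argument goes through.
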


In other words, homotopies between loops in $\sal(Q)$ are equivalent to $\sim$ equivalences of positive subpaths. Since $\c G_x$ consists of $\sim$ equivalence classes of positive paths and the formal inverses of these equivalence classes, Corollary \ref{thm:salcpxgarside} follows.

\section{Application to arrangements and matroids}
\label{sec:matroids}

We now discuss applications to simplicial pseudohyperplane arrangements and their associated matroids. First, we will give further terminology regarding these arrangements.

Let $\c A$ be a pseudosphere arrangement. We say that $\c A$ is \emph{proper} (also called \emph{essential} in some texts) if $\bigcap_{S \in \c A} S = \varnothing$. We say that $\c A$ is centrally symmetric if each element $S \in \c A$ is preserved under the central symmetry $-1$ of $\mathbb{S}^{d}$, that is, if $-S = S$ for all $S \in \c A$. We call two arrangements $\c A_1, \c A_2$ \emph{homeomorphic}, and write $\c A_1 \cong \c A_2$, if there is a homeomorphism of $\mathbb{S}^{d}$ which is a bijection between $\c A_1$ and $\c A_2$.

An \emph{(open) chamber} of $\c A$ is a connected component of $\mathbb{S}^{d} \setminus \bigcup_{S \in \c A} S$. 
The union of the closures of the chambers gives a cellulation of $\mathbb{S}^{d}$ (where the pseudospheres of $\c A$ are codimension-1 subcomplexes).
We call $\c A$ \emph{simplicial} if this induced cell structure on $\mathbb{S}^{d}$ is.
The \emph{dual complex} $Q(\c A)$ to $\c A$ is the dual of this cell structure on $\mathbb{S}^{d}$.

We say that the pseudohyperplane arrangement $c \c A$ satisfies the above properties (e.g., proper, simplicial, etc.) if $\c A$ does. A chamber of $c \c A$ is the cone on a chamber of $\c A$. We define $Q(c \c A) = Q(\c A)$.

We note that centrally symmetric simplicial arrangements are always proper, since the intersection of centrally symmetric pseudospheres which bound a simplex must have empty intersection.
We also note that two arrangements are homeomorphic if and only if their dual complexes are, hence homeomorphism is determined combinatorially. 

The following connects arrangements and MH complexes.

\begin{prop}
    \emph{\cite[Prop.~6]{salvetti1993homotopy}}
    Suppose $\c A$ is an arrangement of pseudospheres in $\mathbb{S}^{d}$. Then the complex $Q(\c A)$ dual to $\c A$ is MH.
\end{prop}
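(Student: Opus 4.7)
The plan is to use the combinatorial description of a pseudosphere arrangement by \emph{sign vectors}: after fixing a positive halfspace for each $S \in \c A$, every point $x \in \mathbb{S}^d$ is assigned a sign $\sigma_S(x) \in \{+,-,0\}$ (with $0$ meaning $x \in S$), and the faces of the cellulation of $\mathbb{S}^d$ are the constant-sign loci. A cell $e$ of $Q(\c A)$ is dual to some face $F$ of $\c A$, and the vertices of $e$ correspond exactly to the chambers $C$ whose sign vectors extend that of $F$, i.e., $\sigma_S(C) = \sigma_S(F)$ whenever $\sigma_S(F) \neq 0$. The first key step is the \emph{distance formula}
\[
    d(C_1, C_2) = \#\{\,S \in \c A : \sigma_S(C_1) \neq \sigma_S(C_2)\,\}.
\]
The lower bound is automatic since each edge of $Q^{(1)}$ flips exactly one sign; the upper bound is obtained by producing an edge-path that flips the differing signs one at a time, which one builds inductively on $|\c A|$ using the recursive axiom that $S_J \cap S_k$ is again a pseudosphere in $S_J$.

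With the distance formula in hand, I would define $\ubar w(v,e)$ (resp.\ $\tbar w(v,e)$) to be the unique chamber adjacent to $F$ whose signs on $F$-containing pseudospheres agree with (resp.\ are opposite to) those of $C_v$. Existence of such a chamber follows from the fact that the assignment ``$\sigma_S(F)$ on non-$F$-containing pseudospheres, $\pm \sigma_S(C_v)$ on $F$-containing pseudospheres'' always yields a genuine chamber's sign vector --- this is the face-composition principle $F \circ C_v$ (resp.\ $F \circ (-C_v)$), which is a consequence of the pseudosphere arrangement axioms. Uniqueness and the min/max conditions of Definition~\ref{def:qmh} then follow directly from the distance formula: for any vertex of $e$ the signs on non-$F$-containing pseudospheres contribute the same fixed amount to the distance, so the total distance is minimized precisely by matching $C_v$ on the remaining coordinates and maximized by flipping them.

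Identities (A) and (B) of Definition~\ref{def:qmh} follow by direct sign-vector computation: for $e' \subseteq e$ dual to $F' \supseteq F$, one has $\{S : S \supseteq F'\} \subseteq \{S : S \supseteq F\}$, so applying $\ubar w$ a second time (or $\tbar w$ twice, with cancelling negations) keeps the signs equal to $\sigma_S(C_v)$ on $F'$-containing pseudospheres, matching $\ubar w(v, e')$, and similarly for the four other identities. For the LMH and MH conditions, each cell $e$ viewed intrinsically has the same $1$-skeleton as the full subgraph of $Q^{(1)}$ on chambers adjacent to $F$, with the same distances (a minimal path between two such chambers need only flip $F$-containing signs, keeping the path inside $e$); hence the intrinsic maps $\ubar w_e, \tbar w_e$ are given by the same sign-vector recipe, and both the LMH compatibility across shared subcells and the MH agreement with the global maps are immediate.

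The main obstacle lies in the two combinatorial inputs: the distance formula and the face-composition principle that each prescribed sign assignment yields an actual chamber adjacent to $F$. Both hinge on the recursive structure of pseudosphere arrangements --- not on any linear, metric, or realizability hypothesis --- and once they are in place the QMH, LMH, and MH conditions reduce to routine manipulations of sign vectors.
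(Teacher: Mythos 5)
Your proposal is correct and follows essentially the same route as the source the paper cites for this statement (the paper gives no independent proof, deferring entirely to Salvetti): the separation-set distance formula $d(C_1,C_2)=\#\{S : \sigma_S(C_1)\neq\sigma_S(C_2)\}$ together with the face--chamber compositions $F\circ C_v$ and $F\circ(-C_v)$ realizing $\ubar w(v,e)$ and $\tbar w(v,e)$ is exactly the mechanism underlying Salvetti's argument. The two combinatorial inputs you isolate are indeed the only substantive points, and both hold for pseudosphere arrangements by the recursive axiom, so the remaining verifications of (A), (B), LMH, and MH reduce to the sign-vector bookkeeping you describe.
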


Thus, as with the other MH complexes we have considered, there is a Salvetti complex $\sal(Q(\c A))$, which for brevity we denote $\sal(\c A)$. (We define $\sal(c\c A) = \sal(\c A)$.) We now determine those arrangements $\c A$ which give FISMH complexes $Q(\c A)$, and hence for which Corollary \ref{thm:fismhgarside} holds.

\begin{lemma}
    \emph{\cite[Thm.~20]{salvetti1993homotopy}} Suppose $\c A$ is a proper pseudosphere arrangement. Then $Q(\c A)$ is flat.
\end{lemma}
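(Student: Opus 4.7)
The plan is to prove flatness by induction on the common length $n = d(x,y) = \ell(\gamma_1) = \ell(\gamma_2)$ of two minimal positive paths $\gamma_1,\gamma_2$ from a vertex $x$ to a vertex $y$ in $\sal(Q(\c A))$. The cases $n \leq 1$ are trivial, and if $\gamma_1,\gamma_2$ share a first edge we strip it off and apply the inductive hypothesis to their tails. The interesting case is when the first edges of $\gamma_1$ and $\gamma_2$ differ; here we must manufacture a single elementary equivalence that lets us align the two paths so that induction can take over.

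First I would identify $Q(\c A)^{(0)}$ with the set of chambers of $\c A$ and verify (using the MH structure and the sign-vector description of chambers) that $d(x,y)$ equals the number of pseudospheres in $\c A$ that separate the chambers $x$ and $y$; in particular, every edge of a minimal positive path from $x$ to $y$ crosses a distinct separating pseudosphere. So if the first edges of $\gamma_1,\gamma_2$ cross distinct pseudospheres $S_1, S_2$, then both $S_1$ and $S_2$ separate $x$ from $y$. Properness of $\c A$ guarantees that $S_1 \cap S_2$ is a genuine codimension-$2$ face of the cellulation and hence corresponds to a unique $2$-cell $e$ of $Q(\c A)$ containing $x$; this $2$-cell is dual to the restricted rank-$2$ pseudosphere arrangement around $S_1 \cap S_2$.

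Within the $2$-cell $e$, let $z = \tbar{w}(x,e)$ be the opposite vertex. The rank-$2$ structure of this local subarrangement gives exactly two minimal positive paths $\eta_1, \eta_2$ from $x$ to $z$, each going around $\partial e$ in the opposite direction; by construction $\eta_i$ starts with the same first edge as $\gamma_i$. Since these are two minimal positive paths from $v = x$ to $\tbar{w}(v,e)$ within a single cell $e$ of $Q(\c A)$, they are elementarily equivalent by definition of $\sim$. Next, I would extend $\eta_1, \eta_2$ to minimal positive paths $\tilde\gamma_i = \eta_i \zeta$ from $x$ to $y$ by prepending to a common minimal positive tail $\zeta$ from $z$ to $y$; such a $\zeta$ exists precisely because both $S_1, S_2$ separate $x$ from $y$, and the MH identity $d(x,y) = d(x,z) + d(z,y)$ (a consequence of the Proposition following Definition~\ref{def:qmh}) places $z$ on some minimal gallery from $x$ to $y$. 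Then $\tilde\gamma_1 \sim \tilde\gamma_2$ by a single elementary equivalence, while $\gamma_i \sim \tilde\gamma_i$ for $i=1,2$ follows from the inductive hypothesis applied to the shared-first-edge tails of length $n-1$. Combining, $\gamma_1 \sim \gamma_2$.

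The main obstacle is the middle step: confirming that $z = \tbar{w}(x,e)$ always lies on a minimal gallery from $x$ to $y$, so that $\zeta$ exists and $\tilde\gamma_i$ really is minimal. This reduces to checking that the two pseudospheres crossed in passing from $x$ to $z$ inside $e$ are precisely $S_1$ and $S_2$, and that every pseudosphere separating $z$ from $y$ still separates $x$ from $y$. Both facts follow from the local rank-$2$ geometry around the codimension-$2$ face $S_1 \cap S_2$, together with the sign-vector characterization of separation; properness is again used to ensure that this local picture is well-defined and finite. Once this is in hand, the induction closes and flatness of $Q(\c A)$ follows.
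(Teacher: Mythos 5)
The paper does not actually prove this lemma --- it is imported wholesale from Salvetti [Thm.~20] --- so your attempt is being measured against correctness rather than against an argument in the text. The part of your proof you flag as the main obstacle is in fact fine: if the two walls $S_1,S_2$ of the chamber $x$ both separate $x$ from $y$ \emph{and} cobound a codimension-$2$ face $F$ of $\overline{x}$, then in the rank-$2$ localization at $F$ the tope of $y$ is forced to be the antipode of the tope of $x$, so every pseudosphere through $F$ separates $x$ from $y$; hence $z=\tbar w(x,e)$ lies on a minimal gallery to $y$, the common tail $\zeta$ exists, and the single elementary equivalence across the $2$-cell $e$ aligns the first edges so the induction closes.

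The genuine gap is one step earlier. You assert that properness makes $S_1\cap S_2$ ``a codimension-$2$ face of the cellulation'' yielding a $2$-cell of $Q(\c A)$ containing $x$. That is not true in general: $S_1\cap S_2$ is a codimension-$2$ pseudosphere (a union of many faces), and the $2$-cells of $Q(\c A)$ at the vertex $x$ correspond to codimension-$2$ faces of the closed chamber $\overline{x}$. Two facets of a non-simplicial chamber need not share such a face --- e.g.\ opposite walls of a quadrilateral chamber in a rank-$3$ arrangement, both of which can be initial walls of minimal galleries to $y=-x$ --- and the lemma is stated for \emph{proper} arrangements with no simpliciality hypothesis. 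That generality is exactly what makes this a real theorem (it is the ``homotopy theorem'' in the oriented-matroid literature), and closing the non-simplicial case requires either the elementary moves across cells of \emph{all} dimensions that Salvetti's definition of $\sim$ permits, or a more delicate induction than aligning first edges through a single $2$-cell. Your argument as written does establish flatness for simplicial proper arrangements, where any two facets of a chamber do share a codimension-$2$ face, and that special case happens to be all the paper uses downstream; but it does not prove the lemma as stated.
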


The following is clear from the definitions.

\begin{lemma}
    If $\c A$ is a proper centrally symmetric pseudosphere arrangement, then $Q(\c A)$ is involutive with involution $\phi(x) = -x$.
\end{lemma}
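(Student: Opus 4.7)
The plan is to take $\phi$ to be the restriction of the antipodal map $-1$ to the vertex set of $Q(\c A)$, and verify the two conditions in the definition of involutive. My main tool will be the identification of graph distance in $Q^{(1)}$ with the number of pseudospheres separating two chambers, a standard property of the dual complex of a pseudosphere arrangement that is built into the MH structure.

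First I would confirm that $\phi$ is a well-defined order-$2$ automorphism of $Q^{(1)}$. Because $-S = S$ for each $S \in \c A$, the antipodal map permutes the arrangement and hence preserves the induced cell structure on $\mathbb{S}^{d}$; in particular it permutes chambers and takes adjacent pairs of chambers (those differing across a single pseudosphere) to adjacent pairs, so it induces an automorphism of $Q^{(1)}$. Order $2$ is immediate from $(-1)^2 = \mathrm{id}$.

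Next I would show that every $S \in \c A$ separates any chamber $C$ from $-C$. Since $-1$ fixes $S$ setwise but has no fixed points on $\mathbb{S}^{d}$, it permutes the two open halfspaces of $S$; applying the Brouwer fixed-point theorem to the closure of a halfspace (which is a closed ball by the definition of pseudosphere) rules out the possibility that either halfspace is preserved setwise, so the halfspaces are swapped. Combined with the separation-count description of the metric, this yields $d(C, -C) = |\c A|$, which is the maximum possible value. For uniqueness of $-C$ as the farthest vertex, any chamber at distance $|\c A|$ from $C$ must be separated from $C$ by every pseudosphere and hence lies on the same side as $-C$ of every $S$; since $\c A$ is proper, chambers are determined by their sign vectors, so such a chamber must equal $-C$.

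For the betweenness condition, given any vertex $w$ corresponding to a chamber $C'$, I observe that each $S \in \c A$ separates exactly one of the pairs $(C, C')$ and $(C', -C)$: $C$ and $-C$ lie in opposite halfspaces of $S$ by the previous step, and $C'$ lies in precisely one halfspace, so exactly one of these pairs is separated by $S$. Summing the separation counts over all $S \in \c A$ yields
\[
    d(C, C') + d(C', -C) = |\c A| = d(C, -C),
\]
which is the desired additivity. The only nontrivial step is the Brouwer argument showing that $-1$ swaps rather than preserves the halfspaces of each pseudosphere; once that is in hand, both involutive conditions reduce to routine bookkeeping with sign vectors and the identification of $d$ with separation count.
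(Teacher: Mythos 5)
Your proof is correct, and it supplies an actual argument where the paper offers none: the paper dismisses this lemma with ``the following is clear from the definitions.'' Your route --- antipodal map permutes chambers, each $S \in \c A$ has its halfspaces swapped by $-1$, hence $C$ and $-C$ are separated by every pseudosphere, and both involutive conditions follow from the identification of $d$ with the separation count --- is surely the intended one. Two small remarks. First, the separation-count description of $d$ is the real content here; it is standard for dual complexes of pseudosphere arrangements (the nontrivial inequality being the existence of a gallery crossing each separating pseudosphere exactly once, which uses the arrangement axioms), but it is a property of $Q(\c A)$ rather than something literally packaged in the QMH/MH axioms, so it deserves an explicit citation. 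Second, your Brouwer step assumes the closure of a halfspace is a closed ball; the paper's definition only makes the \emph{open} halfspaces open balls, so either invoke the tameness built into the standard pseudosphere definition (as in Mandel or Folkman--Lawrence) or replace Brouwer by Smith's fixed-point theorem applied to the free $\mathbb{Z}/2$-action on the contractible open halfspace. Also, the fact that chambers are determined by their sign vectors follows from the arrangement axioms rather than from properness, but this does not affect the argument.
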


Since centrally symmetric simplicial arrangements are always proper,
we have the following
 
\begin{prop} \label{thm:simppseudogarside}
    If $\c A$ is a centrally symmetric simplicial pseudosphere arrangement, then $\pi_1(\sal(\c A)) = \pi_1(\sal(c\c A))$ is a weak Garside group.
\end{prop}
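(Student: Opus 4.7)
The plan is to reduce this proposition to Corollary \ref{thm:salcpxgarside} by showing that the dual complex $Q(\c A)$ is FISMH whenever $\c A$ is centrally symmetric and simplicial. The three results cited in the excerpt immediately preceding the proposition do essentially all of the technical work, so the proof amounts to assembling them after observing that a centrally symmetric simplicial arrangement is automatically proper, as already remarked in the text.

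First I would verify each of the four defining conditions of FISMH in turn. That $Q(\c A)$ is MH is the content of Salvetti's Proposition 6 cited just above, which applies to any pseudosphere arrangement. That $Q(\c A)$ is simplicial follows from the hypothesis that the chamber cellulation of $\mathbb{S}^d$ induced by $\c A$ is simplicial: then $Q(\c A)$ is, by construction, the dual of a simplicial triangulation of the manifold $\mathbb{S}^d$, which is exactly the definition given in Section \ref{sec:simpMH}. Flatness is given by the cited Theorem 20 of Salvetti, since $\c A$ is proper. Involutivity is given by the subsequent lemma, since $\c A$ is proper and centrally symmetric, with involution $\phi(x) = -x$.

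Having established that $Q(\c A)$ is FISMH, I would invoke Corollary \ref{thm:salcpxgarside} to conclude that $\pi_1(\sal(Q(\c A)), x)$ is isomorphic to the isotropy group $\c G_x$ of the associated Garside groupoid at any vertex $x$, and hence is a weak Garside group. The equality $\pi_1(\sal(\c A)) = \pi_1(\sal(c\c A))$ is merely the convention $\sal(c\c A) = \sal(\c A) = \sal(Q(\c A))$ adopted at the start of this section.

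Since every nontrivial step has already been packaged as a cited lemma or proposition, I do not anticipate any substantive obstacle in writing out the proof. The only point requiring even minor unpacking is the simplicial condition --- confirming that the dual of a simplicial cell structure on the manifold $\mathbb{S}^d$ matches the abstract definition of a simplicial MH complex from Section \ref{sec:simpMH} --- but this is essentially a restatement of definitions.
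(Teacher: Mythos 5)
Your proposal is correct and follows exactly the route the paper takes: the paper's (implicit) proof is precisely to note that centrally symmetric simplicial arrangements are proper, assemble the three preceding results (MH, flat, involutive) together with simpliciality of the dual complex to conclude $Q(\c A)$ is FISMH, and then apply Corollary \ref{thm:salcpxgarside}. Your extra remark unpacking why the dual of a simplicial cellulation of $\mathbb{S}^d$ satisfies the paper's definition of a simplicial MH complex is a reasonable elaboration of a step the paper leaves implicit, but it does not constitute a different approach.
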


We can connect the topology of $\sal(c \c A)$ more directly with the arrangement $c \c A$ itself. This connection is completely analagous to the connection between a hyperplane arrangement and its Salvetti complex via the complement of its complexification. 

\begin{defn}
    Let $c \c A$ be a pseudohyperplane arrangement in $\mathbb{R}^d$. For $H \in c \c A$, define
    \begin{align*}
        H_{\mathbb{C}} = H + iH \subseteq \mathbb{C}^d
    \end{align*}
    (where $H + iH = \{\,h_1 + ih_2 : h_i \in H\,\}$).
    Then, we define
    \begin{align*}
        M(c\c A) = \mathbb{C}^d \setminus \bigcup_{H \in c \c A} H_{\mathbb{C}}.
    \end{align*}
\end{defn}

We would like to say that $M(c \c A)$ is homotopy equivalent to $\sal(c \c A)$, but this is not quite true. Instead, we need the following variation.

Let $\hat Q = \hat Q(\c A) = \hat Q(c \c A)$ be the cellulation of the unit ball $D^{d+1}$ of $\mathbb{R}^{d+1}$ obtained by identifying $\partial D^{d+1}$ with the cellulation $Q(\c A)$ of the unit sphere $\mathbb{S}^{d}$. We have the following

\begin{prop}
    \emph{\cite[Prop.~9]{salvetti1993homotopy}}
    The complex $\hat Q(\c A)$ is an MH complex with exactly one cell (namely, $D^{d+1}$) of top dimension.
\end{prop}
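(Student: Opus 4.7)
The plan is to extend the MH structure on $Q(\c A)$ to $\hat Q$ by analyzing the sole new cell, $D^{d+1}$. First I would observe that $\hat Q$ has exactly the cells of $Q(\c A)$ together with a single top-dimensional cell, namely $D^{d+1}$; this is immediate from the construction and already delivers the ``exactly one cell of top dimension'' clause of the proposition. The $1$-skeleta of $\hat Q$ and $Q(\c A)$ coincide, so the combinatorial distance $d$ is the same on both. For every cell $e \neq D^{d+1}$, the maps $\ubar w(\cdot, e)$, $\tbar w(\cdot, e)$, and the local data $\ubar w_e, \tbar w_e$ are then inherited verbatim from the MH structure on $Q(\c A)$.

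Next I would define $\ubar w$ and $\tbar w$ at the new top cell. Since $V(D^{d+1}) = V(\hat Q)$, the minimality clause of (ii) forces $\ubar w(v, D^{d+1}) = v$, which trivially satisfies (i). For $\tbar w(v, D^{d+1})$ I would exhibit a unique vertex $v^*$ of $\hat Q$ at maximum distance from $v$ and set $\tbar w(v, D^{d+1}) = v^*$. Existence and uniqueness of $v^*$ rely on the sign-vector combinatorics of the pseudosphere arrangement: $d(v,w)$ equals the number of pseudospheres of $\c A$ separating the chambers $v$ and $w$, so the maximum is attained at the chamber whose sign vector is opposite to that of $v$ on every pseudosphere, which exists and is unique under the standing hypotheses.

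The main obstacle is verifying condition (iii) for $e = D^{d+1}$. Because $\ubar w(v, D^{d+1}) = v$, the outer equalities in (A) and (B) are tautological, and the content reduces to proving that for every face $e' \in \c Q(D^{d+1}) \setminus \{D^{d+1}\}$, writing $v^* = \tbar w(v, D^{d+1})$,
\[
    \ubar w(v, e') = \tbar w(v^*, e') \quad\text{and}\quad \tbar w(v, e') = \ubar w(v^*, e').
\]
This is a ``duality under antipodality'': since $v^*$ disagrees with $v$ on every pseudosphere of $\c A$, the vertex of $e'$ minimizing distance to $v$ is precisely the vertex of $e'$ maximizing distance to $v^*$, and conversely. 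I would deduce this from the sign-vector descriptions of $\ubar w(\cdot, e')$ and $\tbar w(\cdot, e')$ already in force on $Q(\c A)$.

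Finally, for the LMH and MH compatibility involving $D^{d+1}$, I would declare $\ubar w_{D^{d+1}}$ and $\tbar w_{D^{d+1}}$ to coincide with the global functions just defined; this is consistent because, as a subspace of $\hat Q$, the cell $D^{d+1}$ carries the full $1$-skeleton of $\hat Q$, so local and global distances agree. The MH compatibility on a proper face $e' \subsetneq D^{d+1}$ is then exactly the identity established in the previous paragraph, while compatibility among cells of $Q(\c A)$ is already present since $Q(\c A)$ is LMH.
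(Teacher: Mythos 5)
The paper does not prove this proposition at all --- it is quoted verbatim from Salvetti (his Prop.~9) --- so there is no internal argument to compare against. Your reconstruction is essentially the standard (and essentially Salvetti's) one: keep the MH data on the cells of $Q(\c A)$, which is legitimate because the $1$-skeleta and hence the distance function agree, set $\ubar w(v,D^{d+1})=v$ and $\tbar w(v,D^{d+1})=v^*$ for the unique opposite chamber, and reduce condition (iii) for the new cell to the antipodal duality $\ubar w(v,e')=\tbar w(v^*,e')$ and $\tbar w(v,e')=\ubar w(v^*,e')$, which follows from $d(v,w)+d(v^*,w)=|\c A|$ for every chamber $w$. The only places where you lean on unproved input are (a) the identification of the graph distance in $Q(\c A)^{(1)}$ with the number of separating pseudospheres, and (b) the existence of the opposite chamber $v^*$, i.e.\ the fact that the set of topes is closed under negation. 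Neither is immediate from the paper's definition of a pseudosphere arrangement (which does not assume central symmetry), and both ultimately rest on the oriented-matroid structure of the face poset (or on Mandel's result that every pseudosphere arrangement is homeomorphic to a centrally symmetric one); a citation or a sentence of justification is needed there, but the architecture of the argument is sound and delivers both clauses of the proposition.
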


Thus we can define $\hsal(c \c A) = \hsal(\c A) \coloneqq \sal(\hat Q(\c A))$, which we call the \emph{completed Salvetti complex}. Note that the $d$-skeletons of $Q$ and $\hat Q$ are the same, so the $d$-skeletons of $\sal(\c A)$ and $\hsal(\c A)$ are the same. In particular, since we're assuming $d \geq 2$, the 2-skeletons are the same, so $\pi_1(\hsal(\c A)) \cong \pi_1(\sal(\c A))$.
The relevant result for us is the following (translated into our notation)

\begin{prop}
    \emph{\cite[Thm.~2.30]{deshpande2016arrangements}} If $c \c A$ is a pseudohyperplane arrangement, then $M(c \c A)$ is homotopy equivalent to $\hsal(c \c A)$.
\end{prop}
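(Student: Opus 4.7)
The plan is to adapt Salvetti's original deformation-retraction argument for linear hyperplane arrangements to the pseudohyperplane setting. The starting point is the elementary observation that a point $z = x + iy$ in the ambient complex space lies in $M(c\c A)$ if and only if for every pseudohyperplane $H \in c\c A$, at least one of $x$ and $y$ fails to lie in $H$. Consequently, $M(c\c A)$ is naturally stratified by pairs of faces $(F_x, F_y)$ of the real arrangement $c\c A$, with $z \in M(c\c A)$ precisely when no pseudohyperplane contains both $F_x$ and $F_y$ simultaneously. This combinatorial description of $M(c\c A)$ depends only on the face data of $c\c A$, which is the same data encoded in $\hat Q(\c A)$.

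Next, I would construct a CW subcomplex $K \subseteq M(c\c A)$ in bijective correspondence with the cells of $\hsal(c\c A)$. By Proposition \ref{prop:summaryofX}, cells of $\sal(\hat Q(\c A))$ are pairs $(e,v)$ consisting of a cell $e$ of $\hat Q$ together with a vertex $v$ of $e$; cells of $\hat Q$ are either dual cells to faces of $\c A$ or the extra top cell $D^{d+1}$, while vertices of $\hat Q$ correspond to chambers of $\c A$. So combinatorially, a cell of $\hsal(c\c A)$ is a pair (face $F$ of $\c A$, chamber $C$ with $F \subseteq \bar C$), together with the extra family indexed by (top cell, chamber $C$). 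For each such pair I would pick a topological cell of the appropriate dimension sitting inside the stratum $(F_x = F, F_y = C)$ of $M(c\c A)$; the boundary formula of Proposition \ref{prop:summaryofX} then lets one verify that the attaching maps match and that $K$ is homeomorphic to $\hsal(c\c A)$.

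The main step, and the principal obstacle, is to produce a strong deformation retraction of $M(c\c A)$ onto $K$ in the absence of a linear structure. For linear hyperplane arrangements one may use explicit straight-line homotopies within each stratum, but here we have no straight lines along which to slide $y$ inside a pseudohyperplane toward the chosen representative of its face. My plan is to proceed by descending induction on $\dim F_x$, using the pseudosphere axioms to build local retractions piece by piece: the axioms guarantee that the intersection of any subfamily of pseudospheres is again a pseudosphere arrangement of one lower dimension and admits topological collar neighborhoods inside its ambient stratum, so one can define product-like local retractions stratum-by-stratum. The essential content of the induction is then to check that these local retractions glue continuously across stratum boundaries, and this gluing reduces to a combinatorial compatibility condition ultimately controlled by the MH structure on $\hat Q(c\c A)$ (in particular by the vertex maps $\ubar w, \tbar w$ of Definition \ref{def:qmh}). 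A cleaner alternative I would consider is to bypass the retraction altogether by constructing a good cover of $M(c\c A)$ by contractible open sets indexed by the pairs $(F,C)$ above and invoking the nerve lemma, whereupon the combinatorial bijection just described identifies the nerve with $\hsal(c\c A)$.
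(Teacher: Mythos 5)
First, a point of comparison: the paper does not prove this proposition at all --- it is imported verbatim from \cite[Thm.~2.30]{deshpande2016arrangements} and used as a black box, so there is no internal argument to measure your attempt against. What you have written is an attempt to reprove a cited external result.

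As a sketch, your setup is sound and matches the strategy of the known proofs going back to Salvetti: the observation that $x+iy \in M(c\c A)$ if and only if no $H \in c\c A$ contains both $x$ and $y$, the resulting stratification by pairs of faces, and the identification of the cells of $\hsal(c\c A)$ with pairs (face, adjacent chamber) together with the extra top-dimensional family coming from $D^{d+1}$ are all correct. But the proposal stops exactly where the theorem becomes hard. The stratum-by-stratum retraction is asserted as a plan, and the two ingredients you lean on are precisely the nontrivial ones: (i) that each pseudosphere, and each intersection stratum, is bicollared in its ambient stratum --- pseudospheres are merely topological embeddings of spheres, and the existence of collars is a genuine theorem about pseudosphere arrangements rather than a formal consequence of the arrangement axioms; and (ii) that the local retractions glue continuously, which you defer to ``a combinatorial compatibility condition controlled by the MH structure'' without formulating that condition or verifying it. The nerve-lemma alternative has the same hole in a different place: one must exhibit open sets, indexed by the pairs $(F,C)$, that are contractible and have contractible or empty intersections, and contractibility of those sets is again a statement about the local topology of the pseudo-arrangement that does not follow from the combinatorics alone. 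So what you have is a plausible outline of the published proof strategy, not a proof; completing it would essentially require reproducing the technical content of \cite{deshpande2016arrangements} (or of Salvetti's metrical-hemisphere machinery, which is how the cited proof organizes exactly these local-to-global verifications).
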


In particular, $\pi_1(M(c \c A)) \cong \pi_1(\sal(c \c A))$. Moreover, \cite[\S 6]{salvetti1993homotopy} implies $\hsal(c \c A)$ is a $K(\pi,1)$ for simplicial arrangements, i.e., has contractible universal cover. (This was also shown in \cite{deshpande2016arrangements}.) So, we can summarize all of the above results (in addition to the known result for $d = 1$) into the following

\begin{thm} \label{thm:mainconsequence1}
    If $c \c A$ is a proper simplicial pseudohyperplane arrangement, then $\pi = \pi_1(M(c \c A))$ is a weak Garside group and $M(c \c A)$ is a $K(\pi,1)$.
\end{thm}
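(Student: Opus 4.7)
The plan is to assemble Theorem~\ref{thm:mainconsequence1} from the pieces that have already been established just above it. The two conclusions (weak Garside and $K(\pi,1)$) are essentially independent: one is group-theoretic and flows from Proposition~\ref{thm:simppseudogarside}, the other is topological and flows from the homotopy equivalence of $M(c\c A)$ with a Salvetti-type complex.

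First, I would deal with the weak Garside conclusion. Proposition~\ref{thm:simppseudogarside} already gives that $\pi_1(\sal(c\c A))$ is a weak Garside group (reading the hypothesis here in light of the earlier remark that centrally symmetric simplicial arrangements are automatically proper, which is the case needed for the involutive structure of Theorem~\ref{thm:fismhgarside}). To transport this to $M(c\c A)$, I would use the two intermediate facts recorded just before the theorem statement: (a) the $d$-skeletons of $Q(\c A)$ and $\hat Q(\c A)$ agree by construction, so by Proposition~\ref{prop:summaryofX} the $d$-skeletons of $\sal(\c A)$ and $\hsal(\c A)$ agree, and in particular (for $d \geq 2$) their $2$-skeletons agree, yielding $\pi_1(\sal(c\c A)) \cong \pi_1(\hsal(c\c A))$; and (b) the proposition cited from \cite[Thm.~2.30]{deshpande2016arrangements} gives $M(c\c A) \simeq \hsal(c\c A)$, hence $\pi_1(M(c\c A)) \cong \pi_1(\hsal(c\c A))$. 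Chaining these isomorphisms together, $\pi_1(M(c\c A))$ is a weak Garside group. The $d=1$ case is handled by the classical observation mentioned in the statement: a simplicial arrangement on $\mathbb{S}^1$ is finite, and its complexified complement retracts onto a wedge of circles, whose fundamental group is free and hence (weakly) Garside.

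Second, for the $K(\pi,1)$ conclusion, I would invoke \cite[\S 6]{salvetti1993homotopy} (also recovered in \cite{deshpande2016arrangements}), which gives that $\hsal(c\c A)$ is aspherical for any simplicial arrangement. Combined with the homotopy equivalence $M(c\c A)\simeq \hsal(c\c A)$ from \cite[Thm.~2.30]{deshpande2016arrangements}, this immediately yields that $M(c\c A)$ is a $K(\pi,1)$ for $\pi = \pi_1(M(c\c A))$.

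There is no real obstacle: the theorem is essentially a packaging statement, and the entire argument is the concatenation of isomorphisms $\pi_1(M(c\c A)) \cong \pi_1(\hsal(c\c A)) \cong \pi_1(\sal(c\c A))$ together with the citations of Propositions already invoked in the section. The only points worth double-checking are the skeletal dimension issue (to make sure the $2$-skeleta of $\sal$ and $\hsal$ agree, which is where the hypothesis $d\geq 2$ enters) and the reconciliation of the word ``proper'' in the statement with the centrally symmetric hypothesis carried implicitly through Proposition~\ref{thm:simppseudogarside}.
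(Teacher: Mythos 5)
Your proposal assembles the theorem from exactly the same pieces, in the same order, as the paper does: Proposition~\ref{thm:simppseudogarside} for the weak Garside property of $\pi_1(\sal(c\c A))$, the agreement of $2$-skeleta of $\sal(\c A)$ and $\hsal(\c A)$ to identify fundamental groups, \cite[Thm.~2.30]{deshpande2016arrangements} to transfer to $M(c\c A)$, and \cite[\S 6]{salvetti1993homotopy} for asphericity, with the hypothesis mismatch (proper vs.\ centrally symmetric) left implicit just as in the paper. The only inaccuracy is your parenthetical treatment of $d=1$: the complexified complement of $n \geq 2$ lines through the origin in $\mathbb{C}^2$ does not retract onto a wedge of circles (its fundamental group is $\mathbb{Z} \times F_{n-1}$, not free, though still weak Garside), but the paper likewise dispatches this case with an unelaborated appeal to ``the known result for $d=1$.''
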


The combinatorial information contained in a pseudosphere arrangement can be extracted to objects known as \emph{oriented matroids}, which have broad uses in mathematics. We recall this connection here, and explain how our above results apply to this setting.

\begin{defn}
    An oriented matroid is a triple $\c M = (E, \c E, {}^*)$ where $E$ is a finite set, $\c E$ a collection of non-empty subsets of $E$, and ${}^* : E \to E$ a ``fixed point-free'' involution of $E$ (so $(x^*)^* = x$ and $x^* \not= x$) such that
    \begin{enumerate}
        \item If $A,B \in \c E$ and $A \subseteq B$, then $A = B$,
        \item If $S \in \c E$ then $S^* \in \c E$ and $S \cap S^* = \varnothing$, and
        \item For $S,T \in \c E$ with $x \in S \cap T^*$ and $S \not= T^*$, there is a $C \in \c E$ with $C \subseteq (S \cup T) \setminus \{x,x^*\}$.
    \end{enumerate}
    We call $E$ the \emph{ground set} and the elements of $\c E$ the \emph{circuits} of the oriented matroid $\c M$. The \emph{rank} $\rk(\c M)$ of $\c M$ is the minimal cardinality of a set in $\c E$. An \emph{isomorphism} between oriented matroids is a bijection between ground sets which induces a bijection on the circuits and commutes with the involution.
\end{defn}

Let $\c A$ be a pseudosphere arrangement in $\mathbb{S}^{d}$ with each element endowed with a choice of orientation. Let
\begin{align*}
    E = E(\c A) = \{\,H : H \text{ a halfspace of } S \in \c A \,\}.
\end{align*}
Define ${}^*$ on $E$ by exchanging the halfspaces of any given pseudosphere, e.g., if $H_1,H_2$ are the halfspaces of $S$ then $H_1^* = H_2$ and $H_2^* = H_1$.
We then let $\c E = \c E(\c A)$ be the collection of minimal subsets $C$ of $E$ with
\begin{enumerate}
     \item $C \cap C^* = \varnothing$, and
     \item $\bigcup_{H \in C} \overline H = \mathbb{S}^{d}$,
 \end{enumerate} 
where $\overline H$ is the topological closure of the halfspace $H$ in $\mathbb{S}^{d}$. 
By \cite[Thm.~16]{folkman1978oriented}, $\c M(\c A) \coloneqq (E(\c A), \c E(\c A), {}^*)$ is an oriented matroid of rank $d+1$. ($\c M(\c A)$ is not to be confused with $M(c \c A)$.) The content of the Folkman-Lawrence realization theorem is the following.

\begin{prop} \emph{\cite[Thm.~20]{folkman1978oriented}} \label{prop:toprep}
    Let $\c M = (E, \c E, {}^*)$ be an oriented matroid with 
    $\mathrm{rk}(\c M) = r$. Then there is a proper central pseudosphere arrangement $\c A(\c M)$ in $\mathbb{S}^{r-1}$ such that the set of cells of the corresponding dual complex $Q(\c A(\c M))$ are order-isomorphic to the subsets $C$ of $\c E$ with $C \cap C^* = \varnothing$, with vertices of $Q(\c A(\c M))$ mapping to elements of $\c E$. 
    Moreover, if $\c M$ is an oriented matroid, then
    \[
        \c M(\c A(\c M)) \cong \c M,
    \]
    and if $\c A$ is a proper pseudosphere arrangement, then
    \[
        \c A(\c M(\c A)) \cong \c A
    \]
    (where $\cong$ denotes isomorphism and homeomorphism, respectively). 
\end{prop}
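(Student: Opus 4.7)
The plan is to construct $\c A(\c M)$ by first building a regular CW decomposition of $\mathbb{S}^{r-1}$ whose face poset encodes the full combinatorial data of $\c M$, and then exhibiting each $e \in E$ as a pseudosphere $S_e$ inside that decomposition. Both ``moreover'' assertions will then follow by matching the construction against the definition of $\c M(\c A)$.

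First I would pass from the circuit data $\c E$ to \emph{covector data}. Define the covectors of $\c M$ to be subsets $X \subseteq E$ with $X \cap X^* = \varnothing$ that are orthogonal (in the standard signed-vector sense) to every circuit $C \in \c E$, and order them by the conformal relation $X \leq Y$ iff $X \subseteq Y$ and $X^* \subseteq Y^*$. The resulting poset $\c L(\c M)$ should be a graded lattice of rank $r$; the lattice and rank properties are extracted from the weak elimination axiom~(3) via a composition operation on covectors, dual to the standard argument for circuits.

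The technical heart is to show that $\c L(\c M)\setminus\{\hat 0\}$ is the face poset of a regular CW decomposition of $\mathbb{S}^{r-1}$. My approach is induction on $r$: choose any $e \in E$, form the contraction $\c M/e$ (rank $r-1$ on $E \setminus \{e,e^*\}$) and the deletion $\c M \setminus \{e,e^*\}$, realize each inductively, and glue two copies of the PL ball carrying the deletion along the PL sphere carrying the contraction to obtain $\mathbb{S}^{r-1}$. Axiom~(3) dictates which faces are matched under the gluing. The pseudosphere $S_e$ is then the image of the gluing sphere, and its two halfspaces are the two balls indexed by the sign of $e$. Applying the induction to $\c M/J$ for each $J \subseteq E$ realizes $S_J$ as a pseudosphere inside $S_{J'}$ for every $J' \subsetneq J$, which is precisely the arrangement axiom.

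For the ``moreover'' statements: $\c M(\c A(\c M)) \cong \c M$ follows because the chambers of $\c A(\c M)$ are the topes (maximal covectors) of $\c L(\c M)$, and the circuits derived from them via the definition of $\c M(\c A)$ reproduce $\c E$ by covector--circuit orthogonality. Conversely $\c A(\c M(\c A)) \cong \c A$ holds because a proper pseudosphere arrangement is combinatorially (hence topologically up to homeomorphism) determined by its dual complex, which the construction reproduces from $\c M(\c A)$. The genuine obstacle is the inductive gluing: lifting the deletion/contraction exact sequence of oriented matroids to a compatible topological exact sequence requires showing globally that the attaching data supplied by axiom~(3) assembles into a bona fide regular CW $(r-1)$-sphere, not merely a PL ball with matching boundary patterns. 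Classically this is achieved either by producing a recursive shelling of $\c L(\c M)$ (exploiting a linear order on $E$) or by a direct PL sphere-recognition argument at the corank-$2$ intervals, and it is here --- not in the lattice setup or in the naturality checks --- that the true depth of Folkman--Lawrence resides.
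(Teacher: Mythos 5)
The paper does not prove this proposition at all: it is quoted verbatim as the Folkman--Lawrence topological representation theorem, cited to \cite[Thm.~20]{folkman1978oriented} (with the pseudosphere formulation attributed to \cite{mandel1982topology}), so there is no in-paper argument to compare yours against. Judged on its own terms, your proposal is a reasonable outline of the \emph{modern} proof strategy (covector lattice, deletion/contraction induction, shellability or PL sphere recognition), but it is an outline rather than a proof, and you concede as much: the assertion that $\c L(\c M)\setminus\{\hat 0\}$ is the face poset of a regular CW decomposition of $\mathbb{S}^{r-1}$ \emph{is} the theorem, and deferring it to ``a recursive shelling \dots or a direct PL sphere-recognition argument'' leaves the entire content unestablished. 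Two further points in the sketch would fail as written. First, passing from the circuit axioms to a covector lattice satisfying composition and elimination is itself a substantial theorem (the equivalence of circuit and covector axiomatizations), not a routine dualization of axiom~(3). Second, the inductive gluing is garbled: if $e$ is not a coloop, the deletion $\c M\setminus\{e,e^*\}$ still has rank $r$ and is realized by an $(r-1)$-sphere, not a PL ball, so one does not ``glue two copies of the ball carrying the deletion along the sphere carrying the contraction''; the actual difficulty is to locate, inside the already-realized arrangement for the deletion, an embedded pseudosphere inducing the contraction's combinatorics (a single-element extension), and controlling that embedding is where the work lies.

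Since the proposition is an imported black box in this paper, the appropriate ``proof'' here is the citation itself; if you want a self-contained argument you should follow either Folkman--Lawrence's original pseudohemisphere induction or the shellability proof in the oriented-matroid literature, both of which require machinery well beyond what your sketch supplies.
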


In particular, two pseudosphere arrangements are homeomorphic if and only if their corresponding oriented matroids are isomorphic. 

This allows us to sensibly define $\sal(\c M) = \sal(\c A(\c M))$ and $\hsal(\c M) = \hsal(\c A(\c M))$ for any oriented matroid $\c M$. By unraveling definitions, one sees that these are equivalent to typical definitions of a Salvetti complex of an oriented matroid \cite[\S 5.5]{ziegler1987algebraic}. Thus Theorem \ref{thm:mainconsequence1} can be rephrased in terms of matroids as follows.

\begin{thm} \label{thm:simpmatgar}
    If $\c M$ is an oriented matroid and its pseudosphere realization $\c A(\c M)$ is simplicial, then $\pi = \pi_1(\sal(\c M))$ is a weak Garside group and $\hsal(\c M)$ is a $K(\pi,1)$.
\end{thm}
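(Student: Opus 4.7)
The plan is to prove Theorem \ref{thm:simpmatgar} by translating through the Folkman--Lawrence realization theorem into the setting of Theorem \ref{thm:mainconsequence1}. First, given an oriented matroid $\c M$ of rank $r$ whose pseudosphere realization $\c A(\c M)$ is simplicial, I would use Proposition \ref{prop:toprep} to view $\c A(\c M)$ as a proper pseudosphere arrangement in $\mathbb{S}^{r-1}$. The matroid involution ${}^*$ corresponds under the realization to the antipodal swap of halfspaces, so $\c A(\c M)$ is centrally symmetric; combined with the simplicial hypothesis on $\c A(\c M)$, the cone $c\c A(\c M)$ is a proper simplicial pseudohyperplane arrangement.

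Next, by the definitions adopted in Section \ref{sec:matroids}, $\sal(\c M) = \sal(\c A(\c M))$ and $\hsal(\c M) = \hsal(c\c A(\c M))$, and unraveling these definitions recovers the standard Salvetti complex of an oriented matroid (in the sense of Ziegler). Applying Theorem \ref{thm:mainconsequence1} to $c\c A(\c M)$ then yields directly that $\pi := \pi_1(M(c\c A(\c M)))$ is a weak Garside group and that $M(c\c A(\c M))$ is a $K(\pi,1)$. Using the Deshpande--Sutar homotopy equivalence $M(c\c A) \simeq \hsal(c\c A)$ cited just above Theorem \ref{thm:mainconsequence1}, we conclude that $\hsal(\c M)$ is a $K(\pi,1)$ with $\pi_1(\hsal(\c M)) \cong \pi$. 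Since the $d$-skeletons of $Q$ and $\hat Q$ coincide (and in particular so do the $2$-skeletons of $\sal(\c M)$ and $\hsal(\c M)$ once $r \geq 3$, with the low-rank cases handled directly), $\pi_1(\sal(\c M)) \cong \pi_1(\hsal(\c M)) = \pi$, finishing the argument.

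The main obstacle, such as it is, is purely notational bookkeeping: one must carefully verify that the three notions of a Salvetti complex appearing in the paper---the MH-complex construction $\sal(Q(\c A))$, the complement $M(c\c A)$ up to homotopy, and the standard combinatorial Salvetti complex of an oriented matroid---are mutually compatible, and that the centrally symmetric structure required by Proposition \ref{thm:simppseudogarside} (which sits inside the chain leading to Theorem \ref{thm:mainconsequence1}) is automatic for arrangements realizing oriented matroids. Once this compatibility is in place, Theorem \ref{thm:simpmatgar} is an essentially formal corollary of Theorem \ref{thm:mainconsequence1} and Proposition \ref{prop:toprep}; no further mathematical input beyond what has already been assembled is needed.
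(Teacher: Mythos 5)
Your proposal is correct and follows essentially the same route as the paper, which presents Theorem \ref{thm:simpmatgar} as a direct rephrasing of Theorem \ref{thm:mainconsequence1} via the Folkman--Lawrence realization (Proposition \ref{prop:toprep}) and the definitional identification $\sal(\c M) = \sal(\c A(\c M))$, $\hsal(\c M) = \hsal(\c A(\c M))$. Your explicit remarks on central symmetry being automatic for realizations of oriented matroids and on the agreement of $2$-skeletons of $\sal$ and $\hsal$ are exactly the bookkeeping the paper leaves implicit.
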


\bibliographystyle{alpha}
\bibliography{PHs}

\end{document}